\documentclass{amsart}
\usepackage{xy,amssymb,latexsym}
\theoremstyle{plain}
\newtheorem{theorem}{Theorem}
\newtheorem{corollary}{Corollary}

\newtheorem{lemma}{Lemma}
\newtheorem{definition}{Definition}
\newtheorem{example}{Example}

\begin{document}
\title{A Geometric Approach to Defining Multiplication}
                   
\author{Peter F. McLoughlin and Maria Droujkova}

\email{pmclough@csusb.edu}
\email{droujkova@gmail.com}
\maketitle
The set-theoretic construction of the real numbers in the 1870s 
marked a shift in emphasis from geometric to algebraic reasoning (see [3]). This shift in reasoning may have inadvertently created some problems in the lower-level mathematics curriculum (in the U.S. at least). Let us clarify what we mean by this.
Currently, prospective K-12 math teachers and Science, Technology, Engineering, and Mathematics (STEM) students can graduate from university without ever seeing the following: \\
(1) A rigorous definition of multiplication;\\ 
(2) A proof that two triangles have the same angles if and only if the lengths of their corresponding sides are proportional;\\
(3) A proof regarding the relationship between multiplication and the area of a rectangle.\\
Part of the problem is that multiplication, the area of a rectangle, and similar triangles are not defined independently of each other in the mathematics curriculum.  Here is the definition of similar triangles given in a college algebra text:``Two triangles are similar if the corresponding angles are equal and the lengths of the corresponding sides are proportional.''. Please note this definition is not independent of multiplication (this definition should actually be a Theorem). As the reader can check, many texts define similar triangles this way. If you ask a STEM student or a prospective K-12 math teacher ``what is the area of a rectangle?'' they will more than likely say ``length times width''. Again, this definition is dependent on multiplication (we understand that this is more a problem of semantics than mathematics). The intimate relationships between multiplication, the area of a rectangle, and similar triangles are important because integration rests on limits of areas of rectangles and trigonometry rests on the fact that similar triangles have corresponding sides that are proportional.\\
\\ 
We are suggesting that multiplication, the area of a rectangle, and similar triangles should be defined independently of each other in the mathematics curriculum. After this the intimate relationships between these three things can be proven. In addition, we believe that, all prospective math teachers and STEM students should be exposed to proofs of these intimate relationships before they graduate from university. In this paper we propose a way that this may be accomplished.\\
\\
In this paper we will do the following: (1) show how to geometrically define multiplication, using only basic plane geometry, independently of area and any notion of similar triangles; (2)  prove all the properties of multiplication using only the axioms of plane geometry and the geometric definition of multiplication; (3) explain how the geometric definition of multiplication relates to the area of a right triangle (or rectangle); and (4) explain how by using only the geometric definition of multiplication and the Pythagorean Theorem one can  prove that two triangles have the same angles if and only if the lengths of their corresponding sides are proportional. The interesting and surprising thing, from a pedagogical and/or mathematical point of view, is that all of these results can be proven using only simple geometry (no limits needed). As we shall see, parallel lines in our geometric approach will play a role similar to limits in the standard algebraic approach.\\
\\
The approach we take in this paper is similar to the one taken by Hilbert (see [1]) which in turn can be viewed as a modern interpretation of parts of Euclid's \textit{elements} (see [2]). \textbf {Throughout the paper we will freely assume Euclid's and/or Hilbert's axioms of plane geometry.}\\
\\
\\
\section{the geometric definition of multiplication}
How can we physically interpret real number multiplication? Or to put it another way, is there a simple way to visualize multiplication of any two real numbers?  For example, given any two line segments how could you create a third line segment that is the product of the first two segments? The area model for multiplication is the orthodox physical interpretation that is given to multiplication. Multiplication of two line segments is then viewed as the area of the rectangle they determine. However, for this model to make sense we must relate the area of a rectangle, which is a two-dimensional object, with a line segment, which is a one-dimensional object. In particular, under the area model, how to convert the area of a rectangle to a line segment is not easily visualizable (especially if the numbers being multiplied are not rational).  Moreover, since the area model does not cover signed number multiplication, it is unclear how to multiply signed numbers. \\
We will now propose an alternative physical interpretation of real number multiplication which only uses parallel lines and is based on a simple observation about shadows (or projections). This basic observation is as follows: the hypotenuse of the right triangle determined by an object and its shadow must be parallel to the hypotenuse of any other object and its shadow. Hence, knowing the shadow of one object(we call this object the unit) gives us a way to deduce the shadow of any other object. Now if we replace object with line segment then we are led naturally to the following definition:
\begin{definition} Given two real numbers $a$ and $b$ lay $a$ and $b$ along the y- and x-axis respectively. We define $ab$ (read ``$a$ multiplied by $b$'') to be the x-intercept of the line parallel to the segment $\overline{(b,0),(0,1)}$ which passes through the point $(0,a)$. 
\end{definition}
In this definition, $(0,1)$ is our unit,  $(b,0)$ is our unit shadow determined by $b$, and segment $\overline{(b,0),(0,1)}$ is our unit hypotenuse determined by $b$. 
Please note this geometric definition of multiplication only uses parallel lines and  \textbf{does not presuppose any knowledge of similar triangles.}
\textbf {Moreover, to avoid circular reasoning all the proofs and examples based on the definition will only use congruency arguments.} 
This definition of multiplication is equivalent to the one used by Hilbert(see [1] page 47).
\begin{example}
Use Definition 1 to multiply 2 and 4 then use congruent triangles to show that 2(4)=4+4.
\end{example}
\textbf{Solution:}\\
Step 1:
\[
	\xy
	(16,0)*{\bullet};(0,0)*{\bullet} **\dir{-}; (16,-3)*{4};(0,-3)*{0};
	\endxy
\]
Step 2:
\[
	\xy
	(0,8)*{\bullet};(0,0)*{\bullet} **\dir{-}; (-3,8)*{2};(0,-3)*{0};
	(16,0)*{\bullet};(0,0)*{\bullet} **\dir{-}; (16,-3)*{4};
	\endxy
\]
Step 3:
\[
	\xy
	(0,8)*{\bullet};(0,0)*{\bullet} **\dir{-}; (-3,8)*{2};(0,-3)*{0};
	(16,0)*{\bullet};(0,0)*{\bullet} **\dir{-}; (16,-3)*{4};
	(-3,4)*{1};(0,4)*{\bullet};
	\endxy
\]
Step 4: Connect (0,1) to (4,0).\\
\[
	\xy
	(0,8)*{\bullet};(0,0)*{\bullet} **\dir{-}; (-3,8)*{2};(0,-3)*{0};
	(16,0)*{\bullet};(0,0)*{\bullet} **\dir{-}; (16,-3)*{4};
	(-3,4)*{1};(0,4)*{\bullet};(0,4)*{\bullet};(16,0)*{\bullet} **\dir{-};
	\endxy
\]
Step 5: Now draw the line passing through (0,2) which is parallel to the segment $\overline{(0,1),(4,0)}$. By Definition 1, the x-intercept of this line is 2(4).
\[
	\xy
	(0,8)*{\bullet};(32,0)*{\bullet} **\dir{-};
	(32,0)*{\bullet};(0,0)*{} **\dir{-}; 
	(0,0)*{};(0,8)*{\bullet} **\dir{-}; 
	(0,4)*{\bullet};(16,0)*{\bullet} **\dir{-}; 
	(-3,8)*{2};(16,-3)*{4};(32,-3)*{2(4)};(-3,4)*{1};(0,-3)*{0};(0,0)*{\bullet};
	\endxy
\]
Step 6: Refer to the figure below. There exists a point, A, on $\overline{(0,2),(2(4),0)}$ such that $\overline{\left\{A,(4,0)\right\}}$ is parallel to $\overline{(0,1),(0,2)}$. By design $\{(0,1),(0,2),(4,0), A\}$ form the vertices of a parallelogram (why?). This implies $A=(4,1)$.
\[
	\xy
	(16,0)*{};(16,4)*{\bullet} **\dir{..};
	(0,8)*{\bullet};(32,0)*{\bullet} **\dir{-};
	(32,0)*{\bullet};(0,0)*{} **\dir{-}; 
	(0,0)*{};(0,8)*{\bullet} **\dir{-}; 
	(0,4)*{\bullet};(16,0)*{\bullet} **\dir{-}; 
	(-3,8)*{2};(16,-3)*{4};(32,-3)*{2(4)};(-3,4)*{1};(0,-3)*{0};(0,0)*{\bullet};(16,7)*{A}
	\endxy
\]
Using angle-side-angle we can deduce that the triangle determined by $\left\{(0,1), (0,0), (4,0)\right\}$ is congruent to the triangle determined by $\left\{(4,0), A, (2(4),0)\right\}$. Hence we must have 2(4)=4+4.
\begin{example}
Use Definition 1 to multiply -2 and -4 then use congruent triangles to show that $(-2)(-4)=2(4)$.
\end{example}
\textbf{Solution:}\\
Step 1:
\[
	\xy
	(-16,0)*{\bullet};(0,0)*{\bullet} **\dir{-}; (-16,-3)*{-4};(0,-3)*{0};
	\endxy
\]
Step 2:
\[
	\xy
	(0,-8)*{\bullet};(0,0)*{\bullet} **\dir{-}; (-3,-8)*{-2};(0,3)*{0};
	(-16,0)*{\bullet};(0,0)*{\bullet} **\dir{-}; (-16,-3)*{-4};
	\endxy
\]
Step 3:
\[
	\xy
	(0,-8)*{\bullet};(0,0)*{\bullet} **\dir{-}; (-3,-8)*{-2};(2,-2)*{0};
	(-16,0)*{\bullet};(0,0)*{\bullet} **\dir{-}; (-16,-3)*{-4};
	(3,4)*{1};(0,4)*{\bullet};(0,0)*{\bullet} **\dir{-};
		\endxy
\]
Step 4: Connect (0,1) to (-4,0).\\
\[
	\xy
	(0,-8)*{\bullet};(0,0)*{\bullet} **\dir{-}; (-3,-8)*{-2};(2,-2)*{0};
	(-16,0)*{\bullet};(0,0)*{\bullet} **\dir{-}; (-16,-3)*{-4};
	(3,4)*{1};(0,4)*{\bullet};(0,4)*{\bullet};(-16,0)*{\bullet} **\dir{-};
	(0,4)*{\bullet};(0,0)*{\bullet} **\dir{-};
	\endxy
\]
Step 5: Now draw the line through (0,-2) which is parallel to the segment $\overline{(0,1),(-4,0)}$. By Definition 1, the x-intercept of this line is (-2)(-4).\\
\[
	\xy
	(0,-8)*{\bullet};(32,0)*{\bullet} **\dir{-};
	(32,0)*{\bullet};(-16,0)*{} **\dir{-}; 
	(0,4)*{};(0,-8)*{\bullet} **\dir{-}; 
	(0,4)*{\bullet};(-16,0)*{\bullet} **\dir{-}; 
	(-3,-8)*{-2};(-16,-3)*{-4};(32,-3)*{(-2)(-4)};(3,4)*{1};(2,-2)*{0};(0,0)*{\bullet};
	\endxy
\]
Step 6: In the figure below one can easily show, using congruent triangles and example 1, that (-2)(-4)=2(4).\\
\[
	\xy
	(0,-8)*{\bullet};(32,0)*{\bullet} **\dir{-};
	(32,0)*{\bullet};(-16,0)*{} **\dir{-}; 
	(0,4)*{};(0,-8)*{\bullet} **\dir{-}; 
	(0,4)*{\bullet};(-16,0)*{\bullet} **\dir{-}; 
	(-3,-8)*{-2};(-16,-3)*{-4};(32,-3)*{};(-3,4)*{1};(2,-2)*{0};(0,0)*{\bullet};
		(0,8)*{\bullet};(32,0)*{\bullet} **\dir{..};
	(32,0)*{\bullet};(0,0)*{} **\dir{..}; 
	(0,0)*{};(0,8)*{\bullet} **\dir{..}; 
	(0,4)*{\bullet};(16,0)*{\bullet} **\dir{..}; 
	(-3,8)*{2};(16,-3)*{4};(-3,4)*{1};(2,-2)*{0};(0,0)*{\bullet};
	\endxy
\]
\\
\begin{example}
Use Definition 1 to multiply 2 and -4 then use congruent triangles to show that 2(-4)=-(2(4))=-8.
\end{example}
\textbf{Solution:}\\
Step 1:
\[
	\xy
	(-16,0)*{\bullet};(0,0)*{\bullet} **\dir{-}; (-16,-3)*{-4};(0,-3)*{0};
	\endxy
\]
Step 2:
\[
	\xy
	(0,8)*{\bullet};(0,0)*{\bullet} **\dir{-}; (3,8)*{2};(0,-3)*{0};
	(-16,0)*{\bullet};(0,0)*{\bullet} **\dir{-}; (-16,-3)*{-4};
	\endxy
\]
Step 3:
\[
	\xy
	(0,8)*{\bullet};(0,0)*{\bullet} **\dir{-}; (3,8)*{2};(0,-3)*{0};
	(-16,0)*{\bullet};(0,0)*{\bullet} **\dir{-}; (-16,-3)*{-4};
	(3,4)*{1};(0,4)*{\bullet};
	\endxy
\]
Step 4: Connect (0,1) to (-4,0).\\
\[
	\xy
	(0,8)*{\bullet};(0,0)*{\bullet} **\dir{-}; (3,8)*{2};(0,-3)*{0};
	(-16,0)*{\bullet};(0,0)*{\bullet} **\dir{-}; (-16,-3)*{-4};
	(3,4)*{1};(0,4)*{\bullet};(0,4)*{\bullet};(-16,0)*{\bullet} **\dir{-};
	\endxy
\]
Step 5: Now draw the line passing through (0,2) which is parallel to the segment $\overline{(0,1),(-4,0)}$. By Definition 1, the x-intercept of this line is 2(-4).
\[
	\xy
	(0,8)*{\bullet};(-32,0)*{\bullet} **\dir{-};
	(-32,0)*{\bullet};(0,0)*{} **\dir{-}; 
	(0,0)*{};(0,8)*{\bullet} **\dir{-}; 
	(0,4)*{\bullet};(-16,0)*{\bullet} **\dir{-}; 
	(3,8)*{2};(-16,-3)*{-4};(-32,-3)*{2(-4)};(3,4)*{1};(0,-3)*{0};(0,0)*{\bullet};
	\endxy
\]
Step 6: Connect (-4,0) to (-4,1).\\
\[
	\xy
	(-16,0)*{};(-16,4)*{\bullet} **\dir{..};
	(0,8)*{\bullet};(-32,0)*{\bullet} **\dir{-};
	(-32,0)*{\bullet};(0,0)*{} **\dir{-}; 
	(0,0)*{};(0,8)*{\bullet} **\dir{-}; 
	(0,4)*{\bullet};(-16,0)*{\bullet} **\dir{-}; 
	(3,8)*{2};(-16,-3)*{-4};(-32,-3)*{2(-4)};(3,4)*{1};(0,-3)*{0};(0,0)*{\bullet};
	\endxy
\]
In the figure above the triangle determined by the three points (0,1), (0,0) and (-4,0) is congruent to the triangle determined by (-4,0), (-4,1) and (2(-4),0). Hence we must have $2(-4)=-(4+4)=-8$.
It is left as an exercise for the reader to show, using definition 1, that $(-2)4=2(-4)$.\\
The foregoing examples provide the simple visual insight necessary in order to prove the general rules about sign number multiplication. Furthermore, example 1 provides the intuitive insight necessary to show that our multiplication definition reduces to repeated addition when restricted to whole numbers.
\section{Extending students' understanding of multiplication }
In this section we show that our geometric definition of multiplication naturally extends the students' understanding of multiplication from the whole numbers (where multiplication can be viewed as repeated addition) to the real numbers.
\\
\\
\begin{theorem} 
If $a\neq 0$ is a whole number and $b$ any real number then $ab=\sum^{a}_{i=1} b$.
\end{theorem}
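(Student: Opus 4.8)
The plan is to induct on the whole number $a$. The base case $a=1$ is immediate from Definition~1: we need the $x$-intercept of the line through $(0,1)$ parallel to $\overline{(b,0),(0,1)}$, and since the line containing the segment $\overline{(b,0),(0,1)}$ already passes through $(0,1)$, that is the line in question; its $x$-intercept is $b$, so $1\cdot b=b=\sum^{1}_{i=1}b$. For the inductive step, assume $ab=\sum^{a}_{i=1}b$; it then suffices to establish the single recursion $(a+1)b=ab+b$, since $(a+1)b=ab+b=\sum^{a}_{i=1}b+b=\sum^{a+1}_{i=1}b$. This recursion is exactly Example~1 written for general $a$ and $b$, so the idea is to lift that congruence argument.

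Here is how that step goes. By Definition~1, $(ab,0)$ lies on the line $\ell_a$ through $(0,a)$ parallel to $\overline{(b,0),(0,1)}$, and $((a+1)b,0)$ lies on the line $\ell_{a+1}$ through $(0,a+1)$ parallel to the same segment; in particular $\ell_a\parallel\ell_{a+1}$. Introduce the point $Q=(ab,1)$. The segment $\overline{(0,a),(0,a+1)}$ is a unit segment of the $y$-axis and $\overline{(ab,0),Q}$ is a unit segment of the vertical line through $(ab,0)$, so these two segments are parallel and congruent with matching orientation; hence $(0,a),(0,a+1),Q,(ab,0)$ are the vertices of a parallelogram, which forces $\overline{(0,a+1),Q}$ to be parallel to $\overline{(0,a),(ab,0)}$, a segment lying on $\ell_a$. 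Since $\ell_{a+1}$ is the line through $(0,a+1)$ parallel to $\ell_a$, we conclude $Q\in\ell_{a+1}$. Now compare the triangle with vertices $(0,0),(0,1),(b,0)$ to the triangle with vertices $(ab,0),Q,((a+1)b,0)$ by angle-side-angle: the angles at $(0,0)$ and at $(ab,0)$ are right angles; the sides $\overline{(0,0),(0,1)}$ and $\overline{(ab,0),Q}$ are both unit segments, hence congruent; and the angles at $(0,1)$ and at $Q$ are congruent because $\overline{(0,1),(b,0)}$ and $\overline{Q,((a+1)b,0)}$ lie on parallel lines while $\overline{(0,0),(0,1)}$ and $\overline{(ab,0),Q}$ are both vertical, so these two angles have respectively parallel sides. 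Congruence of the triangles yields $\overline{(ab,0),((a+1)b,0)}\cong\overline{(0,0),(b,0)}$, i.e. the $x$-axis step from $ab$ to $(a+1)b$ matches the step from $0$ to $b$, so $(a+1)b=ab+b$.

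I expect the obstacles to be bookkeeping rather than conceptual. First, the figure and the betweenness relations among $0$, $ab$, and $(a+1)b$ depend on the sign of $b$: for $b>0$ the points sit on the positive $x$-axis in the order drawn, for $b<0$ one gets the mirror image, and $b=0$ is trivial since then $\ell_a$ is the $y$-axis and $a\cdot 0=0$ for every $a$; a clean write-up either carries directed segments throughout or splits into these cases. Second, and more importantly, the assertion that two angles with respectively parallel sides are congruent must be proved from the parallel postulate together with congruence of vertical angles (drop a transversal through the two vertices and chase alternate interior angles) rather than taken for granted, since this is precisely the point at which the argument could inadvertently smuggle in similar-triangle reasoning, which the paper has committed to avoiding. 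Once that lemma is recorded, the induction closes with no further difficulty.
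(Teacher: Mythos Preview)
Your argument is correct and is essentially the paper's own proof, just reorganized as a formal induction: the paper partitions the segment from $(0,0)$ to $(0,a)$ into $a$ unit pieces all at once and observes that each determines a right triangle along $\overline{(0,a),(ab,0)}$ congruent by angle--side--angle to the triangle $(0,0),(0,1),(b,0)$, which is exactly your inductive step repeated $a$ times. Your write-up is more careful about the base case, the parallelogram justification for locating $Q$ on $\ell_{a+1}$, and the sign-of-$b$ bookkeeping, but the geometric content is identical.
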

\begin{proof}
By definition of multiplication $\overline{(b,0),(0,1)}$ is parallel to $\overline{(0,a),(ab,0)}$. Partition  $a$ into units. Each unit of this partition can be used to determine a right triangle along the segment $\overline{(0,a),(ab,0)}$ (refer to figure below). Moreover, by angle-side-angle, each of these triangles is congruent to the triangle determined by the points: (0,0), (0,1) and (b,0). It follows we must have $ab=\sum^{a}_{i=1}b$.
\[
	\xy
	(0,43)*{\bullet};(100,0)*{\bullet} **\dir{-};
	(0,43)*{\bullet};(0,36)*{\bullet} **\dir{-};
	(0,36)*{\bullet};(0,14)*{\bullet} **\dir{.};
	(0,14)*{\bullet};(0,7)*{\bullet} **\dir{-};
	(100,0)*{\bullet};(0,0)*{} **\dir{-}; 
	(0,0)*{};(0,7)*{\bullet} **\dir{-}; 
	(0,7)*{\bullet};(16.275,0)*{\bullet} **\dir{-};
	(0,36)*{\bullet};(16.275,36)*{} **\dir{.};
	(83.725,0)*{};(83.725,7)*{} **\dir{-}; 
	(67.45,7,0)*{};(67.45,14)*{} **\dir{-}; 
	(67.45,7)*{};(83.725,7)*{} **\dir{-};
	(0,7)*{};(67.45,7)*{} **\dir{.};
	(0,14)*{};(67.45,14)*{} **\dir{.};
	(-3,43)*{a};(-5,36)*{a-1 };(-3,14)*{2};(-3,7)*{1};(8,37)*{b};(16.275,-3)*{b};(100,-3)*{ab};(91,1)*{b};(75,8)*{b};(0,-3)*{0};(0,0)*{\bullet};
	\endxy
\]
\end{proof}
\section{Multiplication of signed numbers}
In this section we show that multiplication with signed numbers is completely natural. In particular, proving that a negative number times a negative number is a positive number (something most beginning students accept on faith) follows immediately from our definition of multiplication. In addition, our proof, unlike the traditional proof, does not require the use of the distributive property (for the conventional ways of teaching multiplication of signed numbers see [6]).
\begin{theorem}
For any positive real numbers a and b the following are true:\\
1.) $(-a)(-b)=ab$;\\
2.) $a(-b)=a(-b)=-(ab)$.
\end{theorem}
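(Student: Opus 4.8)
The plan is to read all three identities straight off Definition~1, by reflecting the configuration that defines $ab$ across the coordinate axes — which is exactly the manoeuvre carried out by hand in Examples 2 and 3. Recall that this configuration consists of the point $(0,a)$, the unit hypotenuse $\overline{(b,0),(0,1)}$ determined by $b$, and the line through $(0,a)$ parallel to that hypotenuse, whose $x$-intercept is $(ab,0)$ by definition; and recall $a,b>0$, so $ab>0$. A reflection across a line (and the $180^\circ$ rotation about the origin, which is a composition of two such reflections) is an isometry assembled entirely from the congruence axioms, it carries lines to lines, and it carries parallel lines to parallel lines; so each such map converts one instance of Definition~1 into another. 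This keeps us honest: nothing here uses similar triangles, only congruence plus the small parallelogram lemma stated below.

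First I would dispose of $a(-b)=-(ab)$. Reflect the $ab$-configuration across the $y$-axis. The point $(0,a)$ is fixed, the unit hypotenuse $\overline{(b,0),(0,1)}$ is carried to $\overline{(-b,0),(0,1)}$ — which is precisely the unit hypotenuse determined by $-b$ — and the $x$-intercept $(ab,0)$ is carried to $(-ab,0)$. Hence the line through $(0,a)$ parallel to $\overline{(-b,0),(0,1)}$ meets the $x$-axis at $(-ab,0)$, and by Definition~1 that $x$-intercept is the number $a(-b)$; so $a(-b)=-(ab)$. Next, for $(-a)(-b)=ab$, I would reflect the $ab$-configuration across the $x$-axis: now $(0,a)\mapsto(0,-a)$, the $x$-intercept $(ab,0)$ is fixed, and the unit hypotenuse $\overline{(b,0),(0,1)}$ goes to $\overline{(b,0),(0,-1)}$. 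The one extra ingredient is the parallelogram lemma: the quadrilateral with vertices $(b,0),(0,1),(-b,0),(0,-1)$ has diagonals lying along the axes, and they bisect one another at the origin, so by a standard SAS argument it is a parallelogram; in particular $\overline{(b,0),(0,-1)}$ is parallel to $\overline{(-b,0),(0,1)}$. Thus the reflected line passes through $(0,-a)$, is parallel to the unit hypotenuse $\overline{(-b,0),(0,1)}$ determined by $-b$, and meets the $x$-axis at $(ab,0)$; Definition~1 identifies that $x$-intercept as $(-a)(-b)$, so $(-a)(-b)=ab$. Finally, for $(-a)b=-(ab)$ I would reflect the $a(-b)$-configuration (already obtained) across the $x$-axis, or equivalently rotate the original $ab$-configuration by $180^\circ$ about the origin; the same parallelogram lemma shows the resulting line through $(0,-a)$ is parallel to $\overline{(b,0),(0,1)}$ and meets the $x$-axis at $(-ab,0)$, which is $(-a)b$.

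The part that needs care — really the only obstacle — is the bookkeeping: for each of the three reflections one must keep track of which point is fixed, which segment the unit hypotenuse is sent to, and then verify that the image of the defining line is genuinely parallel to the unit hypotenuse appropriate to the new sign pattern. That last verification is where the parallelogram lemma earns its keep, and it is also where one must stay disciplined and use only congruence rather than quietly importing a similar-triangles argument. If one prefers to hew even closer to Examples 2 and 3, each reflection can be replaced by an explicit pair of congruent triangles — two right triangles sharing the relevant leg on an axis, equal by SAS, whose equal hypotenuse angles force both the required parallelism and the equality of $x$-intercepts — with identical logic and only more verbosity.
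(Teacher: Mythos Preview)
Your proof is correct and is essentially the paper's own argument in different clothing: the paper places both sign configurations in one figure and invokes two pairs of SAS-congruent triangles (a small pair relating $\overline{(0,1),(b,0)}$ to $\overline{(0,1),(-b,0)}$ and a large pair relating $\overline{(0,a),C}$ to $\overline{(0,-a),C}$), which is precisely what your reflections and parallelogram lemma encode---as you yourself note in your final paragraph.
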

\begin{proof}
We prove one, the other case follows  mutatis mutandis. In the figure below the two smaller triangles are congruent by side-angle-side. Moreover, the two larger triangles are also congruent by side-angle-side. This implies,  $\overline{(b,0),(0,1)} \parallel \overline{C,(0,a)}$ if and only if  $\overline{(-b,0),(0,1)} \parallel  \overline{C,(0,-a)}$. Hence, by definition of multiplication, we must have  $(-a)(-b)=ab$.
\\
\[
	\xy
	(0,-43)*{\bullet};(100,0)*{\bullet} **\dir{-};
	(100,0)*{\bullet};(0,0)*{} **\dir{-}; 
	(0,0)*{};(0,-43)*{\bullet} **\dir{-}; 
	(0,7)*{\bullet};(-16.275,0)*{\bullet} **\dir{-}; 
	(0,7)*{\bullet};(0,0)*{\bullet} **\dir{-}; 
	(0,0)*{\bullet};(-16.275,0)*{\bullet} **\dir{-}; 
	(0,0)*{\bullet};(16.275,0)*{\bullet} **\dir{-};
	(0,7)*{\bullet};(16.275,0)*{\bullet} **\dir{.};
	(0,43)*{\bullet};(100,0)*{\bullet} **\dir{.};
	(0,0)*{\bullet};(-16.275,0)*{\bullet} **\dir{-};
	(0,7)*{\bullet};(0,43)*{\bullet} **\dir{.};  
	(-3,-43)*{-a};(-16.275,-3)*{-b};(100,-3)*{C};
	(-3,43)*{a};(16.275,-3)*{b};(100,-3)*{};(0,10)*{1};(0,-3)*{0};(0,0)*{\bullet};
	\endxy
\]
\end{proof}
\section{The relative size of Multiplied numbers}
In this section we use our definition of multiplication to visually show that the product of two positive real numbers may not always be larger than the numbers being multiplied.
\begin{theorem} 
If $0<a<1$ and $b>0$ then $ab<b$.
\end{theorem}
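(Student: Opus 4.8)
The plan is to work directly from Definition 1 together with Hilbert's betweenness axioms --- in particular Pasch's axiom --- and to avoid any appeal to slopes, coordinate formulas for intersection points, or similar triangles. Write $\ell_1$ for the line $\overline{(b,0),(0,1)}$ and $\ell_2$ for the line through $(0,a)$ parallel to $\ell_1$; by Definition 1 the $x$-intercept of $\ell_2$ is $ab$. Since $b>0$, the three points $(0,0),(0,1),(b,0)$ are not collinear, so they determine a triangle whose hypotenuse (the side joining $(0,1)$ to $(b,0)$) lies on $\ell_1$. I would draw a figure showing $\ell_2$ cutting across this triangle just below the hypotenuse, so the argument is visual.

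First I would record three small facts. (i) Since $0<a<1$, the point $(0,a)$ lies strictly between $(0,0)$ and $(0,1)$, i.e.\ it is an interior point of the side $\overline{(0,0),(0,1)}$. (ii) $\ell_2\neq\ell_1$: the $y$-axis meets $\ell_1$ only at $(0,1)$ (two distinct lines meet in at most one point, and the $y$-axis $\neq\ell_1$ because $(b,0)\in\ell_1$), so $(0,a)\notin\ell_1$ as $a\neq 1$, and the parallel through $(0,a)$ is therefore not $\ell_1$. (iii) $\ell_2$ passes through none of the vertices $(0,0),(0,1),(b,0)$: through either of the first two it would coincide with the $y$-axis, which is not parallel to $\ell_1$; through $(b,0)$ it would coincide with $\ell_1$; each is a contradiction.

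The heart of the argument is then a single application of Pasch's axiom to the triangle $(0,0),(0,1),(b,0)$ and the line $\ell_2$. By (i) and (iii), $\ell_2$ meets side $\overline{(0,0),(0,1)}$ at an interior point and meets no vertex, so it must also meet one of the other two sides. It cannot meet $\overline{(0,1),(b,0)}\subseteq\ell_1$, since $\ell_2\parallel\ell_1$ and $\ell_2\neq\ell_1$. Hence $\ell_2$ meets $\overline{(0,0),(b,0)}$ at a point strictly between $(0,0)$ and $(b,0)$. That point lies on the $x$-axis, and $\ell_2$ is not the $x$-axis (it is parallel to $\ell_1$, which meets the $x$-axis at $(b,0)$), so $\ell_2$ meets the $x$-axis exactly once and that intersection is the $x$-intercept of $\ell_2$, namely $(ab,0)$ by Definition 1. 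Therefore $(ab,0)$ lies strictly between $(0,0)$ and $(b,0)$, which in the coordinatization of the $x$-axis is exactly $0<ab<b$; in particular $ab<b$.

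I expect the only real obstacle to be bookkeeping rather than ideas: one must verify the hypotheses of Pasch's axiom to the letter (no vertex lies on $\ell_2$, and $(0,a)$ is genuinely an interior point of a side), and one must be careful that the point Pasch produces on $\overline{(0,0),(b,0)}$ is the very point the definition calls $ab$ --- i.e.\ that $\ell_2$ crosses the $x$-axis only once. It is also worth staying disciplined about not smuggling in the slope--intercept algebra of lines, since rebuilding precisely this kind of order statement from pure incidence/betweenness geometry is the point of the section.
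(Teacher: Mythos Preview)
Your argument is correct and mirrors the paper's own proof, which consists solely of the sentence ``Proof follows directly from Definition 1'' together with exactly the figure you describe (the triangle on $(0,0),(0,1),(b,0)$ with the parallel through $(0,a)$ meeting the $x$-axis strictly between $0$ and $b$). You have simply made rigorous, via an explicit appeal to Pasch's axiom and the careful non-degeneracy checks, the visual inference the paper leaves entirely to the picture.
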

\begin{proof}
Refer to the figure below. Proof follows directly from Definition 1.
\[
	\xy
	(0,15)*{\bullet};(0,0)*{} **\dir{-};
	(0,0)*{\bullet};(10,0)*{\bullet} **\dir{-}; 
	(0,10)*{\bullet};(-3,10)*{a} ;
	(0,15)*{\bullet};(10,0)*{}**\dir{-};
	(0,10)*{};(6.6666,0)*{}**\dir{-};
	(6.6666,-3)*{ab}; (-3,10)*{a};(-3,15)*{1};(10,-3)*{b};(6.6666,0)*{\bullet};(-3,0)*{0}
	\endxy
\]

\end{proof}
\section{The Inverse of a real number}
\begin{theorem} 
If $a\neq 0$ is a real number then there exists a unique real number $b\neq 0$ such that $ab=1$. Moreover, we call $b$ the inverse of $a$ and denote it by $\frac{1}{a}$.
\end{theorem}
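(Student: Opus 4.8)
The plan is to run the construction of Definition~1 backwards. Given $a\neq 0$, lay $a$ along the $y$-axis and let $L$ be the line through $(0,a)$ and $(1,0)$; this is the line whose $x$-intercept equals the target value $1$, so the number $b$ we want is exactly the one for which $\overline{(b,0),(0,1)}$ is parallel to $L$. First I would record two facts about $L$: it is not parallel to the $y$-axis, because $(0,a)$ and $(1,0)$ have different $x$-coordinates, and it is not parallel to the $x$-axis, because $a\neq 0$ forces $(0,a)$ off the $x$-axis while $(1,0)$ lies on it. By the parallel postulate there is a unique line $M$ through $(0,1)$ parallel to $L$; since parallelism is transitive in Euclidean geometry and the $x$-axis already meets $L$ (at $(1,0)$), the $x$-axis must also meet $M$, and it does so in exactly one point because $M\neq$ the $x$-axis ($M$ contains $(0,1)$). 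Call that point $(b,0)$.

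Next I would check $b\neq 0$ and $ab=1$. If $b=0$ then $\overline{(b,0),(0,1)}$ lies on the $y$-axis, forcing $M$ to be the $y$-axis and hence $L$, being parallel to $M$, to be parallel to the $y$-axis --- contradicting the first recorded fact; so $b\neq 0$. For the product: by construction $\overline{(b,0),(0,1)}\parallel L$, and $L$ passes through $(0,a)$, so $L$ is the unique line through $(0,a)$ parallel to $\overline{(b,0),(0,1)}$. By Definition~1 the value $ab$ is the $x$-intercept of that line, i.e.\ $ab=1$.

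For uniqueness, suppose $b,b'\neq 0$ satisfy $ab=ab'=1$. By Definition~1 the line through $(0,a)$ parallel to $\overline{(b,0),(0,1)}$ has $x$-intercept $1$, hence passes through $(1,0)$; together with $(0,a)\neq(1,0)$ this identifies it as $L$, and likewise for $b'$. Thus both $\overline{(b,0),(0,1)}$ and $\overline{(b',0),(0,1)}$ are parallel to $L$ and both contain $(0,1)$, so uniqueness of parallels forces the two lines containing them to coincide, and that common line is $M$. Since $M$ meets the $x$-axis in only one point, $(b,0)=(b',0)$ and therefore $b=b'$.

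I expect the only place needing genuine care --- the geometric analogue of ``one may divide by a nonzero number'' --- to be showing that the auxiliary line $M$ actually has an $x$-intercept and that this intercept is nonzero; both facts consume the hypothesis $a\neq 0$ essentially. (A mild edge case is $a=1$, when $(0,1)$ already lies on $L$; there one either takes $M=L$ under the reflexive reading of ``parallel,'' or simply invokes Theorem~1, which gives $1\cdot b=b$ and hence $b=1$ immediately.) Everything else is routine bookkeeping with the parallel postulate and the fact that two distinct lines meet in at most one point.
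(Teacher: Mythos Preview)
Your proof is correct and follows essentially the same construction as the paper: draw the line through $(0,a)$ and $(1,0)$, take the unique parallel through $(0,1)$, and call its $x$-intercept $(b,0)$, so that Definition~1 gives $ab=1$. You are in fact more careful than the paper, which neither verifies that this parallel meets the $x$-axis at a nonzero point nor addresses uniqueness at all; your treatment of both points is sound.
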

\begin{proof}
Consider the unique line that is parallel to $\overline{(0,a),(1,0)}$ and passes through $(0,1)$. Let $(b,0)$ be the x-intercept of this line (refer to figure below). By definition of multiplication we have $ab=1$. 
\[
	\xy
	(0,15)*{\bullet};(0,0)*{} **\dir{-};
	(0,0)*{\bullet};(10,0)*{\bullet} **\dir{-}; 
	(0,10)*{\bullet};(-3,10)*{1} ;
	(0,15)*{\bullet};(10,0)*{}**\dir{-};
	(0,10)*{};(6.6666,0)*{}**\dir{-};
	(6.6666,-3)*{b}; (-3,10)*{1};(-3,15)*{a};(10,-3)*{1};(6.6666,0)*{\bullet};(-3,0)*{0}
	\endxy
\]
\end{proof}
\section{A relationship between areas of Triangles and parallel lines}
The following Lemma is Propostion 37 in book one of Euclid's \textit{elements}.
\begin{lemma}
 $A(\Delta ABC)=A(\Delta ABC_{1})$ if and only if $CC_{1} \parallel AB$
\end{lemma}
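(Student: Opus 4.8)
The statement is Euclid's Proposition I.37 (together with its converse): two triangles $\triangle ABC$ and $\triangle ABC_1$ sharing the base $AB$ have equal area if and only if the line $CC_1$ through their apexes is parallel to $AB$.

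The plan is to reduce everything to the area of a parallelogram, exactly in the spirit of Euclid. First I would recall (Euclid I.35 / I.41) that a triangle on a given base has area equal to half the parallelogram erected on that same base with the same height; more precisely, complete $\triangle ABC$ to a parallelogram $ABDC$ (with $CD \parallel AB$ and $AC \parallel BD$), so that $A(\triangle ABC) = \tfrac12 A(ABDC)$, and do the same for $\triangle ABC_1$. Then the claim becomes a statement purely about the two parallelograms on the common base $AB$: they have equal area iff their opposite sides lie on one and the same line parallel to $AB$. This is precisely the content of Euclid I.35–36 (parallelograms on the same base and ``in the same parallels'' are equal) together with the converse direction, and I would invoke it.

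Concretely, the forward direction ($CC_1 \parallel AB \Rightarrow$ equal areas): if $CC_1 \parallel AB$ then $C$ and $C_1$ lie at the same perpendicular distance from line $AB$; dropping perpendiculars from $C$ and from $C_1$ to $AB$ gives congruent ``altitude'' segments, and then a cut-and-paste congruence argument (the standard shearing argument: the two triangles differ by a pair of congruent triangles that can be added to and subtracted from a common trapezoid) shows the areas are equal. For the converse ($A(\triangle ABC) = A(\triangle ABC_1) \Rightarrow CC_1 \parallel AB$): suppose not, and let $C_1'$ be the point where the line through $C$ parallel to $AB$ meets the line $AC_1$ (or $BC_1$); by the forward direction $A(\triangle ABC_1') = A(\triangle ABC) = A(\triangle ABC_1)$, but $C_1'$ and $C_1$ are distinct points on the same line through $A$ at different distances from $AB$, so one triangle is strictly contained in the other and hence has strictly smaller area — a contradiction. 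This ``strict monotonicity of area under increasing height'' is the one place where I must be slightly careful.

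The main obstacle is that ``area of a triangle'' has not been axiomatized in the excerpt — the paper has only been doing congruence geometry — so I need to be explicit about which properties of the area function $A(\cdot)$ I am allowed to use: (i) congruent figures have equal area, (ii) area is additive over decomposition into non-overlapping pieces, and (iii) a figure properly contained in another has strictly smaller area (or at least no larger, which suffices for the contradiction once combined with a direct containment). With those three properties in hand — which are exactly Euclid's ``common notions'' applied to area — the proof is the classical shearing argument and I would present it as such, citing Euclid I.35–I.41 for the parallelogram facts and filling in the containment/monotonicity step by hand since that is the delicate point.
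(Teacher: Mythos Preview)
Your proposal is correct and follows essentially the same route as the paper: the forward direction is the classical Euclid I.35/I.37 parallelogram-and-shearing argument (the paper carries this out explicitly via auxiliary points $C'$, $C_1'$ and the intersection point $D$ of $CB$ with $AC_1$), and the converse is the same monotonicity contradiction, with the paper choosing the perpendicular foot $\tilde C_1$ of $C_1$ on the line through $C$ parallel to $AB$ rather than your intersection with a cevian. Your explicit flagging of the area axioms (congruence-invariance, additivity, strict monotonicity) is exactly what the paper uses implicitly.
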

\begin{proof} 
Suppose $CC_{1}\parallel AB$. Refer to the diagram below. Let $C^{'}$ be the unique point where $CC^{'}= AB$ and $CC^{'}\parallel AB$.
Similarly, let $C_{1}^{'}$ be the unique point where $C_{1}C_{1}^{'}= AB$ and $C_{1}C_{1}^{'}\parallel AB$.
\[
	\xy
	(0,0)*{\bullet};(50,0)*{\bullet}**\dir{-};(50,-3)*{B};(0,-3)*{A};
	(6,30)*{\bullet};(0,0)*{\bullet}**\dir{-};(6,30)*{\bullet};(50,0)*{\bullet}**\dir{-};
	(30,30)*{\bullet};(0,0)*{\bullet}**\dir{-};(30,30)*{\bullet};(50,0)*{\bullet}**\dir{-};
	(6,33)*{C};(30,33)*{C_{1}};(-44,33)*{C^{'}};(80,33)*{C_{1}^{'}};
	(-44,30)*{\bullet};(0,0)*{\bullet}**\dir{.};(80,30)*{\bullet};(50,0)*{\bullet}**\dir{.};
		(-44,30)*{\bullet};(6,30)*{\bullet}**\dir{.};(80,30)*{\bullet};(30,30)*{\bullet}**\dir{.};
		(20.27,20.27)*{\bullet};(20.27,23.27)*{D};
\endxy
\]
By side-angle-side we have  $\Delta C^{'}CA=\Delta ABC$ and $\Delta ABC_{1}=\Delta C_{1}^{'}C_{1}B$. This implies $ C^{'}A=CB$ and 
$AC_{1}=C_{1}^{'}B$.
Now by side-side-side we have $\Delta C^{'}C_{1}A=\Delta CC_{1}^{'}B$. By referring to the diagram above we see that
$A(\Delta C^{'}C_{1}A)-A(\Delta CDC_{1})+A(\Delta ADB)=A(\Delta C^{'}CA)+A(\Delta ABC)$ and $A(\Delta CC_{1}^{'}B )-A(\Delta CDC_{1})+A(\Delta ADB)=A(\Delta C_{1}^{'}C_{1}B)+A(\Delta ABC_{1})$. Hence, from the foregoing statements we are able to deduce that $A(\Delta ABC)=A(\Delta ABC_{1})$.
\\
Conversely, suppose that  $A(\Delta ABC)=A(\Delta ABC_{1})$. Let $\tilde{C_{1}}$ be the unique point such that $ C\tilde{C_{1}}\parallel AB$ and $C\tilde{C_{1}}\perp \tilde{C_{1}}C_{1}$ or $\tilde{C_{1}}=C_{1}$. By using the first part of the proof we have $A(\Delta ABC)=A(\Delta AB\tilde{C_{1}})$. It follows we must have $A(\Delta ABC_{1})=A(\Delta AB\tilde{C_{1}})$ which is only possible if  $\tilde{C_{1}}=C_{1}$. Therefore, we have shown that $CC_{1} \parallel AB$.
\end{proof}
\begin{theorem}
Let $\Delta A B C$ and $\Delta A B_{1}C_{1}$ be two right triangles with $\angle BAC = \angle BAC_{1}= 90$. The areas of the two triangles are equal if and only if $BC_{1}$ is parallel to $B_{1}C$. 
\end{theorem}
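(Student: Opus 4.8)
The plan is to cut each triangle along the segment $\overline{BC_1}$, cancel a common piece, and thereby reduce the statement to a single application of the preceding Lemma with $\overline{BC_1}$ (equivalently $\overline{B_1C}$) as the shared base.

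First I would pin down the configuration. Here $A$ is the common right-angle vertex, so $B$ and $B_1$ lie on one line through $A$, while $C$ and $C_1$ lie on the perpendicular line through $A$, and for the triangles to be nondegenerate none of $B,B_1,C,C_1$ equals $A$; I shall treat the case (the one relevant to what follows) in which $B,B_1$ lie on a common ray from $A$ and $C,C_1$ on a common ray from $A$, the other sign patterns going through with the same argument. Since swapping the roles of $(B,C)$ and $(B_1,C_1)$ alters neither the hypothesis nor the conclusion, I may assume $B$ lies on the segment $\overline{AB_1}$. Two easy sub-cases are then disposed of directly. If $B=B_1$ or $C=C_1$, the triangles share a leg; by the Lemma their areas coincide exactly when the two remaining vertices coincide (those vertices lie on a line perpendicular to the shared leg), and that is also exactly when $\overline{BC_1}$ and $\overline{B_1C}$ — two lines through the shared vertex — are parallel, so both sides of the biconditional hold precisely when the triangles coincide. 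If instead $C$ lies strictly between $A$ and $C_1$, then $\Delta ABC$ sits properly inside $\Delta AB_1C_1$, so their areas differ, while $\overline{BC_1}$ and $\overline{B_1C}$ are cevians of $\Delta AB_1C_1$ issued from two different vertices and hence meet inside it, so they are not parallel; both sides of the biconditional are false. This leaves the main case: $B$ strictly between $A$ and $B_1$, and $C_1$ strictly between $A$ and $C$.

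In that case $\overline{BC_1}$ is a cevian of $\Delta ABC$ (from the vertex $B$ to the point $C_1$ on side $\overline{AC}$) and simultaneously a cevian of $\Delta AB_1C_1$ (from the vertex $C_1$ to the point $B$ on side $\overline{AB_1}$). Cutting each triangle along it and using only the additivity of area over a dissection,
\[
A(\Delta ABC)=A(\Delta ABC_1)+A(\Delta BC_1C),\qquad A(\Delta AB_1C_1)=A(\Delta ABC_1)+A(\Delta BC_1B_1).
\]
Cancelling the common term $A(\Delta ABC_1)$ shows that $A(\Delta ABC)=A(\Delta AB_1C_1)$ if and only if $A(\Delta BC_1C)=A(\Delta BC_1B_1)$. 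Now $\Delta BC_1C$ and $\Delta BC_1B_1$ have the common base $\overline{BC_1}$, and the vertices $C$ and $B_1$ lie on the same side of the line $BC_1$, so the Lemma applies and gives $A(\Delta BC_1C)=A(\Delta BC_1B_1)$ if and only if $\overline{CB_1}\parallel\overline{BC_1}$, which is exactly the asserted condition. Chaining the equivalences finishes the proof.

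So the geometric content is a single dissection plus one invocation of the Lemma; I expect the only real friction to be the configuration bookkeeping of the second paragraph — sorting out which of $C,C_1$ is the nearer to $A$ so that the dissection is legitimate, and verifying the ``same side'' hypothesis needed for the converse half of the Lemma. That bookkeeping, rather than any substantial piece of geometry, is the main obstacle.
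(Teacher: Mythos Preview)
Your proof is correct and follows essentially the same route as the paper: subtract the common piece $\Delta ABC_1$ to reduce to $A(\Delta BC_1C)=A(\Delta BC_1B_1)$, then invoke the preceding Lemma (Euclid~I.37) on the common base $\overline{BC_1}$. The paper's version is terser---it simply asserts $A(\Delta ABC)=A(\Delta AB_1C_1)\Leftrightarrow A(\Delta BC_1C)=A(\Delta B_1BC_1)$ from a figure and applies the Lemma---so the configuration bookkeeping you supply in your second paragraph (which of $C,C_1$ is nearer $A$, and the same-side check) is extra care that the paper leaves implicit.
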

\begin{proof}
 Refer to figure below. Note $A(\Delta A B C)=A(\Delta A B_{1}C_{1}) \Leftrightarrow A(\Delta BC_{1}C)=A(\Delta B_{1}BC_{1})$. Also by Lemma 4, $A(\Delta BC_{1}C)=A(\Delta B_{1}BC_{1})  \Leftrightarrow BC_{1}\parallel B_{1}C$. Hence claim follows. 
\[
 \xy
	(0,70)*{\bullet};(42,0)*{\bullet} **\dir{..};
	(0,50)*{\bullet};(30,0)*{\bullet} **\dir{..};
	(0,0)*{\bullet};(42,0)*{\bullet} **\dir{-};
	(0,0)*{\bullet};(0,70)*{\bullet} **\dir{-};
	(0,50)*{};(42,0)*{} **\dir{-};
	(0,70)*{};(30,0)*{} **\dir{-};
	(-3,0)*{A};(-3,50)*{B};(42,-3)*{C};(-3,70)*{B_{1}};(30,-3)*{C_{1}};
	\endxy
\]
\end{proof}
\begin{corollary}
Let $\Delta A B C$ and $\Delta A B_{1}C_{1}$ be two triangles with $\angle BAC = \angle BAC_{1}$. The areas of the two triangles are equal if and only if $BC_{1}$ is parallel to $B_{1}C$. 
\end{corollary}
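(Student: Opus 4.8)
The plan is to reduce the general (oblique) case to the right-angle case already established in Theorem~7, using the same shear-by-parallel-lines idea that powers Lemma~4. Given $\triangle ABC$ and $\triangle AB_1C_1$ sharing the angle $\angle BAC = \angle BAC_1$ at $A$, I would first drop perpendiculars: let $P$ be the foot of the perpendicular from $C$ to line $AB$, so that $\triangle APC$ is right-angled at $A$ after relabelling, and similarly handle $C_1$. More cleanly, I would pick a ray from $A$ perpendicular to $AB$ and project: there is a right triangle $\triangle AB'C$ with $\angle B'AC = 90$ and the \emph{same area} as $\triangle ABC$, obtained by sliding $B$ along the line through $B$ parallel to... — here I must be careful, since shearing $B$ does not in general fix $C$. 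So instead the cleaner route is: realise both triangles as halves of parallelograms on the two fixed rays out of $A$, and compare areas via base$\times$height along a common ray.

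Concretely, I would set the common angle's two sides as fixed rays $r$ (containing $B$ and $B_1$) and $s$ (containing $C$ and $C_1$) emanating from $A$. Write $A(\triangle ABC) = \tfrac12 |AB|\,h_C$ where $h_C$ is the distance from $C$ to line $r$, and $A(\triangle AB_1C_1) = \tfrac12 |AB_1|\, h_{C_1}$. The equality of areas is then $|AB|\,h_C = |AB_1|\,h_{C_1}$. On the other hand, $BC_1 \parallel B_1C$ is, by the converse direction of Lemma~4 applied to base $\overline{BB_1}$... — actually Lemma~4 with the fixed base $AB$-line won't directly see $C_1$. The right application is: $BC_1 \parallel B_1 C$ holds iff $\triangle B B_1 C$ and $\triangle B B_1 C_1$ have equal area (Lemma~4 with base $\overline{BB_1}$), and a short additive/subtractive area computation — exactly as in Lemma~4, splitting along the intersection of $BC_1$ with $B_1C$ or with the appropriate median — converts "$\triangle ABC$ and $\triangle AB_1C_1$ equal area" into "$\triangle BB_1C$ and $\triangle BB_1C_1$ equal area." This is precisely the manipulation $A(\triangle ABC) = A(\triangle AB_1C_1) \Leftrightarrow A(\triangle BC_1C) = A(\triangle B_1BC_1)$ used in the proof of Theorem~7, and it did not use the right angle anywhere.

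So the argument is: (i) observe $A(\triangle ABC) = A(\triangle AB_1C_1)$ iff $A(\triangle B C_1 C) = A(\triangle B_1 B C_1)$, by adding or subtracting the common piece $\triangle ABC_1$ (or $\triangle ABB_1$, depending on configuration) from both sides — valid for any angle at $A$, since no perpendicularity was invoked in Theorem~7's opening line; (ii) apply Lemma~4 to the triangles $\triangle B C_1 C$ and $\triangle B_1 B C_1$, which share the base $\overline{B C_1}$ wait — better, share base $\overline{BB_1}$ after rewriting $\triangle BC_1C$ as having the same area as a triangle on $\overline{BB_1}$ — to conclude the area equality is equivalent to $BC_1 \parallel B_1 C$. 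I expect the main obstacle to be the configuration bookkeeping in step (i): depending on whether $B_1$ lies beyond $B$ on ray $r$ and $C_1$ beyond $C$ on ray $s$, the "common piece" is added on one side and subtracted on the other, and one must check the handful of cases (or argue with signed areas) so that the biconditional survives. Once that is pinned down, step (ii) is a verbatim citation of Lemma~4, and the corollary follows; indeed one could simply remark that the proof of Theorem~7 already makes no use of the hypothesis $\angle BAC = 90$ beyond naming, so the corollary's proof is "the same proof, reread."
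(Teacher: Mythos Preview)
Your proposal is correct and, after the exploratory detours, converges on exactly the paper's one-line proof: the paper simply notes that nothing in the proof of Theorem~5 (the right-angle case, which you label Theorem~7) actually uses the right angle, so the argument carries over verbatim. For the record, your first instinct in step~(ii) was the right one --- the triangles $\triangle BC_{1}C$ and $\triangle B_{1}BC_{1}$ share the base $\overline{BC_{1}}$, and Lemma~1 (your ``Lemma~4'') then yields $BC_{1}\parallel B_{1}C$ directly, so there is no need to rewrite anything on base $\overline{BB_{1}}$.
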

\begin{proof}
Note there is nothing special about the angle being 90 degrees in the proof of Theorem 5.
\end{proof}

\section{A relationship between the area of a right triangle and multiplication}
A moment's reflection should convince the reader that the area of a triangle defines an equivalence relation on the set of all right triangles (this is a fancy way of saying it partitions the set of right triangles).\\
\textbf{Question:} Given a set containing all right triangles of the same area (an equivalence class) is it possible to assign it a real number in a well-defined way? \\
\textbf{Answer:} Yes. In each equivalence class there must be a right triangle that has the unit (some agreed upon length)  as one of its legs (why?). We will take the length of the other leg of this triangle as the real number to assign to the equivalence class.\\
\textbf{Question:} Given any right triangle how can we find the real number that is assigned to its area? \\
\textbf{Answer:} We can use Theorem 5 to do this. In particular, given any right triangle, Theorem 5 provides a way to construct a second right triangle with the following properties: (1) it has the same area as the first triangle, and (2) the unit is one of its legs.
The construction is a follows: let $(0,a)$ and $(b,0)$ be the two legs of any right triangle then the legs of the second triangle can be taken as $(0,1)$ and the x-intercept of the line parallel to the segment $\overline{(b,0),(0,1)}$ which passes through the point $(0,a)$ (note: this is precisely Definition 1). 
\begin{definition}
For positive real numbers $a$ and $b$ we define $T_{ab}$ to be a right triangle with legs $a$ and $b$. Moreover we will denote the area of $T_{ab}$ by $A(T_{ab})$.
\end{definition}
\begin{theorem}
$A(T_{ab})=A(T_{a_{1}b_{1}})$ if and only if $ab=a_{1}b_{1}$.
\end{theorem}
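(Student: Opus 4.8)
The plan is to connect the area relation $A(T_{ab}) = A(T_{a_1 b_1})$ directly to Definition~1 via Theorem~5 (or its Corollary~1), and then reduce everything to a single application of the inverse from Theorem~3. First I would set up both right triangles with a common vertex at the origin and legs along the axes: realize $T_{ab}$ as the triangle with vertices $(0,0)$, $(0,a)$, $(b,0)$, and $T_{a_1 b_1}$ as the triangle with vertices $(0,0)$, $(0,a_1)$, $(b_1,0)$. These are two right triangles sharing the right angle at $A = (0,0)$, so Theorem~5 applies: $A(T_{ab}) = A(T_{a_1 b_1})$ if and only if the segment $\overline{(0,a),(b_1,0)}$ is parallel to the segment $\overline{(0,a_1),(b,0)}$.

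Next I would translate that parallelism condition into a multiplication statement. The cleanest route is to pass through the unit. By Theorem~5 again (used as in the ``Answer'' discussion preceding Definition~2), $A(T_{ab})$ equals the area of the triangle with legs $1$ and $ab$, and likewise $A(T_{a_1 b_1})$ equals the area of the triangle with legs $1$ and $a_1 b_1$. Since both of these normalized triangles have a leg equal to the unit, their areas are equal precisely when $ab = a_1 b_1$ as x-intercepts produced by Definition~1 — indeed, the x-intercept is exactly the assigned real number, and the assignment is well-defined by the earlier discussion. Chaining these equivalences gives the result: $A(T_{ab}) = A(T_{a_1 b_1}) \iff A(T_{1,ab}) = A(T_{1,a_1 b_1}) \iff ab = a_1 b_1$.

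The step I expect to be the main obstacle is making the ``normalization is well-defined'' move fully rigorous without circularity — i.e., arguing that the x-intercept produced by Definition~1 is genuinely an invariant of the area equivalence class and that distinct class members give distinct intercepts. To handle the injectivity direction I would note that if $ab = a_1 b_1$ then feeding $a b$ back through Definition~1 with unit leg $1$ reconstructs the same normalized triangle in both cases, so the areas agree; conversely, if the normalized triangles have equal area and both have the unit as a leg, the other legs must be literally equal segments, forcing $ab = a_1 b_1$. Everything here rests only on Theorem~5, Definition~1, and congruence, so no similar-triangle reasoning or limiting argument sneaks in. A single figure showing the two original triangles together with their common normalized triangle (legs $1$ and $ab$) should make the parallel-line bookkeeping transparent.
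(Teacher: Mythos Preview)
Your proposal is correct and follows essentially the same route as the paper: both arguments use Theorem~5 together with Definition~1 to replace each $T_{ab}$ by an equal-area right triangle having the unit as one leg, and then finish by observing that two right triangles sharing a leg have equal area if and only if their remaining legs coincide. The only cosmetic difference is in the converse direction: the paper introduces $b_{2}$ with $a_{1}b_{2}=ab$ and compares $T_{a_{1}b_{2}}$ with $T_{a_{1}b_{1}}$ (shared leg $a_{1}$), whereas you compare $T_{1,ab}$ with $T_{1,a_{1}b_{1}}$ (shared leg $1$); both reductions rest on the same ``same leg, same area $\Rightarrow$ same other leg'' step you correctly flagged as the point requiring care.
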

\begin{proof}
By definition of multiplication and Theorem 5 we have:\\
(1) $A(\Delta{(0,0),(0,1),(ab,0)})=A(T_{ab})$;\\
(2) $A(\Delta{(0,0),(0,1),(a_{1}b_{1},0)})=A(T_{a_{1}b_{1}})$.\\
 Now if $ab=a_{1}b_{1}$ then, by (1) and (2), we must have $A(T_{ab})=A(T_{a_{1}b_{1}})$. Conversely, suppose $A(T_{ab})=A(T_{a_{1}b_{1}})$. There exists a positive real number $b_{2}$ such that $ab=a_{1}b_{2}$. It follows $A(T_{ab})=A(T_{a_{1}b_{2}})$ and $A(T_{ab})=A(T_{a_{1}b_{1}})$ which imply $A(T_{a_{1}b_{2}})=A(T_{a_{1}b_{1}})$. Hence, we must have $b_{2}=b_{1}$ and $ab=a_{1}b_{1}$.
\end{proof}
\begin{corollary} 
Multiplication is commutative on positive real numbers.
\end{corollary}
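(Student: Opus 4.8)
The plan is to get commutativity essentially for free from Theorem 6, using nothing more than the symmetry of the object $T_{ab}$. First I would unpack Definition 4: $T_{ab}$ denotes a right triangle whose two legs have lengths $a$ and $b$. Since the two legs play interchangeable roles — swapping them is realized by a reflection across the bisector of the right angle, or equivalently by a side-angle-side congruence pairing leg $a$ with leg $a$, leg $b$ with leg $b$, and the right angle with the right angle — a right triangle with legs $a$ and $b$ is congruent to a right triangle with legs $b$ and $a$. Thus $T_{ab}$ and $T_{ba}$ are congruent, and congruent triangles have equal area, so $A(T_{ab}) = A(T_{ba})$.

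Next I would substitute this into Theorem 6. Applying the theorem with $a_1 = b$ and $b_1 = a$, the equality $A(T_{ab}) = A(T_{ba})$ that we just established lies in the hypothesis of the ``only if'' direction, and therefore forces $ab = a_1 b_1 = ba$. As $a$ and $b$ were arbitrary positive reals, commutativity of multiplication on the positive reals follows at once.

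The only point that needs genuine care — and it is the closest thing to an obstacle — is guarding against circularity: the identity $A(T_{ab}) = A(T_{ba})$ must be obtained by a congruence argument internal to plane geometry (interchanging the two legs of a right triangle about its right angle), and \emph{not} by any appeal to commutativity of real-number multiplication. Once one is satisfied that ``swap the legs'' is a legitimate congruence of figures, independent of the arithmetic of $\mathbb{R}$, the corollary is immediate from Theorem 6.
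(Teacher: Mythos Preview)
Your proof is correct and follows exactly the paper's approach: observe that $A(T_{ab})=A(T_{ba})$ (the paper states this in one line, while you spell out the underlying congruence), then invoke Theorem~6 to conclude $ab=ba$. Your added remark on avoiding circularity is well placed and consistent with the paper's ground rules.
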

\begin{proof}
Note for real numbers $a$ and $b$ we have $A(T_{ab})=A(T_{ba})$. By Theorem 6, we must have $ab=ba$.
\end{proof}
\section{The Commutative Property of Multiplication}
\begin{theorem}
Multiplication of real numbers is commutative.
\end{theorem}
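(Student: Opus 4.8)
The plan is to bootstrap from Corollary 3, which already supplies commutativity on the positive reals, using the sign rules of Theorem 2 to mop up the remaining cases. First I would dispose of the case in which one of the factors is $0$, working directly from Definition 1: if $b$ is any real number, the line through the origin parallel to $\overline{(b,0),(0,1)}$ meets the $x$-axis at the origin, so $0\cdot b=0$; and if $a$ is any real number, the segment $\overline{(0,0),(0,1)}$ is vertical, so the parallel line through $(0,a)$ is vertical and has $x$-intercept $0$, giving $a\cdot 0=0$. Hence $0\cdot b=b\cdot 0=0$ and commutativity is immediate whenever a factor vanishes.

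For nonzero factors $x$ and $y$, I would write $x=\varepsilon a$ and $y=\delta b$ with $a,b>0$ and $\varepsilon,\delta\in\{+,-\}$, and split into four cases by sign, noting first that every nonzero real is either positive or the negative of a positive so the split is exhaustive. If $\varepsilon=\delta=+$, then $xy=ab=ba=yx$ by Corollary 3. If $\varepsilon=\delta=-$, then part (1) of Theorem 2 gives $xy=(-a)(-b)=ab$ and $yx=(-b)(-a)=ba$, and these agree by Corollary 3. If the signs differ, say $x=a>0$ and $y=-b$, then part (2) of Theorem 2 gives $xy=a(-b)=-(ab)$ and $yx=(-b)a=-(ba)$, which again agree by Corollary 3; the case $x=-a$, $y=b>0$ is identical after interchanging the roles of $x$ and $y$. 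This covers all possibilities, so $xy=yx$ for all real $x,y$.

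I do not anticipate a real obstacle here. All of the genuine content — the geometric fact that $A(T_{ab})$ depends only on the product $ab$, and hence that $ab=ba$ for positive $a,b$ — has already been isolated in Theorem 6 and Corollary 3, while Theorem 2 provides precisely the sign bookkeeping required. The only points calling for a little care are anchoring the zero cases in Definition 1 rather than in any previously unproven arithmetic, and stating explicitly that the four-case sign split is exhaustive.
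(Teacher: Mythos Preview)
Your proposal is correct and follows essentially the same approach as the paper's one-line proof, which reads ``Claim follows from Corollary~2 and Theorem~2''; you have simply spelled out the sign-case split and added the zero case explicitly. One minor labeling slip: the commutativity-on-positives result you invoke is Corollary~2 in the paper's numbering, not Corollary~3.
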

\begin{proof}
Claim follows from Corollary 2 and Theorem 2.
\end{proof}
\section{The Associative Property of Multiplication}
\begin{definition}
For real numbers a,b and c we define $abc=a(bc)$.
\end{definition}
\begin{lemma} 
If a, b and c are real numbers then $a(cb)=c(ab)$.
\end{lemma}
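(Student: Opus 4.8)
The plan is to handle positive reals first, via an area comparison resting on Theorems 5 and 6, and only afterwards extend to all reals using Theorem 2. Suppose then that $a$, $b$, $c$ are positive, so that $ab$ and $cb$ are positive as well (a product of positive reals is positive, by the reasoning behind Theorem 3). I would set up the two right triangles $\Delta_1$ with vertices $(0,0)$, $(0,c)$, $(ab,0)$ and $\Delta_2$ with vertices $(0,0)$, $(0,a)$, $(cb,0)$, each having its right angle at the origin and its legs on the axes. The legs of $\Delta_1$ are $c$ and $ab$, and the legs of $\Delta_2$ are $a$ and $cb$; so by Theorem 6 it is enough to prove that $\Delta_1$ and $\Delta_2$ have equal area.

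To obtain the equality of areas I would invoke Theorem 5 with $A=(0,0)$, $B=(0,c)$, $C=(ab,0)$, $B_1=(0,a)$, $C_1=(cb,0)$: the areas of $\Delta_1$ and $\Delta_2$ are equal if and only if $\overline{(0,c),(cb,0)}$ is parallel to $\overline{(0,a),(ab,0)}$. But this parallelism is handed to us directly by Definition 1: the defining property of $ab$ says that $\overline{(0,a),(ab,0)}$ is parallel to the unit hypotenuse $\overline{(b,0),(0,1)}$, and the defining property of $cb$ says the same for $\overline{(0,c),(cb,0)}$; hence both segments are parallel to $\overline{(b,0),(0,1)}$ and therefore to each other. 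Consequently $A(\Delta_1)=A(\Delta_2)$, and the converse half of Theorem 6 yields $c(ab)=a(cb)$ whenever $a,b,c$ are positive.

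Finally I would remove the positivity assumption. If any of $a,b,c$ is $0$ then both sides are $0$, since Definition 1 gives $0\cdot x = x\cdot 0 = 0$ for every real $x$. If some of $a,b,c$ are negative, then applying Theorem 2 repeatedly rewrites each of $a(cb)$ and $c(ab)$ as $\pm\bigl(|a|(|c|\,|b|)\bigr)$, where the sign is the product of the signs of $a$, $b$, $c$; the magnitudes agree by the positive case just established, so the two products are equal.

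I expect this last sign bookkeeping to be the only tedious portion, while the single substantive point — and the step to watch — is the observation that the parallelism condition coming out of Theorem 5 is exactly the conclusion of Definition 1, so that no extra plane-geometry argument is needed beyond transitivity of parallelism.
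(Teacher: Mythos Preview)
Your proposal is correct and follows essentially the same route as the paper: reduce to positive $a,b,c$, use Definition~1 to see that $\overline{(0,a),(ab,0)}$ and $\overline{(0,c),(cb,0)}$ are each parallel to $\overline{(0,1),(b,0)}$ and hence to one another, apply Theorem~5 to convert this parallelism into equality of the areas of the right triangles with legs $(a,cb)$ and $(c,ab)$, and then invoke Theorem~6 to obtain $a(cb)=c(ab)$; the extension to arbitrary signs via Theorem~2 is likewise what the paper does.
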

\begin{proof}
We will prove the case when $a>0, b>0$ and $c>0$. The general case will then follow by Theorem 2.
Refer to figure below. By definition of multiplication we have 
$\overline{(cb,0),(0,c)}  \parallel \overline{(0,1),(b,0)}$ and $\overline{(ab,0),(0,a)}  \parallel \overline{(0,1),(b,0)}$. This implies $\overline{(cb,0),(0,c)} \parallel \overline{(ab,0),(0,a)}$. By Theorem 5,  $\overline{(cb,0),(0,c)} \parallel \overline{(ab,0),(0,a)}$ if and only if $A(T_{a(cb)})=A(T_{c(ab)})$. Moreover, by Theorem 6, $A(T_{a(cb)})=A(T_{c(ab)})$ if and only if $a(cb)=c(ab)$. Hence we must have $a(cb)=c(ab)$.
\[
 \xy
	(0,70)*{\bullet};(42,0)*{\bullet} **\dir{..};(0,20)*{\bullet};(12,0)*{\bullet};
	(0,50)*{\bullet};(30,0)*{\bullet} **\dir{..};
	(0,20)*{\bullet};(12,0)*{\bullet} **\dir{..};
	(0,0)*{};(105,0)*{\bullet} **\dir{-};
	(0,0)*{};(0,70)*{\bullet} **\dir{-};
	(0,50)*{};(42,0)*{} **\dir{-};
	(0,70)*{};(30,0)*{} **\dir{-};
	(0,20)*{};(30,0)*{} **\dir{>};
	(0,20)*{};(42,0)*{} **\dir{--};
	(0,70)*{};(105,0)*{} **\dir{>};
	(0,50)*{};(105,0)*{} **\dir{--};
	(-3,50)*{a};(42,-3)*{cb};(-3,70)*{c};(30,-3)*{ab};(12,-3)*{b};(-3,20)*{1};
	\endxy
\]
\end{proof}
\begin{theorem} 
If a, b and c are real numbers then $a(bc)=(ab)c$.
\end{theorem}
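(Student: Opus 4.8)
The plan is to derive associativity directly from the commutativity result of Theorem 2 together with the symmetry Lemma 5. Notice that $a(bc)$ and $(ab)c$ differ only by the order in which the three factors are combined, so the goal is to massage one expression into the other using the one nontrivial rearrangement already available, namely $a(cb)=c(ab)$.

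First I would start from $a(bc)$. By Theorem 2 (commutativity), $bc=cb$, so $a(bc)=a(cb)$. Now apply Lemma 5 with the factors in the roles $a$, $b$, $c$: it gives $a(cb)=c(ab)$. Finally, by commutativity again (Theorem 2), $c(ab)=(ab)c$. Chaining these equalities yields $a(bc)=a(cb)=c(ab)=(ab)c$, which is the desired identity. Since Lemma 5 and Theorem 2 are both already proven for arbitrary real numbers (signed cases included, via the reduction through Theorem 2), no separate sign analysis is needed here.

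I expect essentially no obstacle: the real work has been front-loaded into Lemma 5, whose proof uses the parallelism characterization of equal-area right triangles (Theorem 5) and the correspondence between equal areas and equal products (Theorem 6). The only thing to be careful about is making sure Definition 7 ($abc := a(bc)$) is invoked consistently so that the statement $a(bc)=(ab)c$ is read as an equality of two explicitly parenthesized products rather than as something requiring the notation $abc$ to be pre-justified. One could also remark that, once this theorem is in hand, Definition 7's choice of association is harmless, and the fully general $n$-fold products are unambiguous by the usual induction — but that is a side comment rather than part of the proof.
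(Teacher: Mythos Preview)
Your argument is exactly the paper's: use commutativity to rewrite $a(bc)$ as $a(cb)$ and $(ab)c$ as $c(ab)$, then invoke Lemma~5 for $a(cb)=c(ab)$ and chain. The only slip is bibliographic: in this paper commutativity is Theorem~7 (proved via Corollary~2 and Theorem~2), not Theorem~2, which is the signed-number rule $(-a)(-b)=ab$; fix that citation and the proof matches the paper verbatim.
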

\begin{proof}
By Theorem 7, we have $a(bc)=a(cb)$ and $(ab)c=c(ab)$. This implies $a(bc)=(ab)c$ if and only if $a(cb)=c(ab)$. By Lemma 5 $a(cb)=c(ab)$. Hence we must have $a(bc)=(ab)c$.
\end{proof}
\section{The Distributive Property of Multiplication}
\begin{theorem}
$(b+c)a= ba+ca$ for all real numbers a,b and c.
\end{theorem}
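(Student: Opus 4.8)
The plan is to prove the case $a,b,c>0$ by a direct congruence argument and then reduce every other sign pattern to it. The form of the statement is already the convenient one: since $(b+c)a$, $ba$ and $ca$ all have second factor $a$, Definition~1 realizes all three as x-intercepts of lines parallel to the one fixed segment $d:=\overline{(a,0),(0,1)}$, so the three lines in play are mutually parallel. (By Theorem~7 the statement is equivalent to the left-hand form $a(b+c)=ab+ac$, but the right-hand form avoids changing which segment plays the role of $d$.) The cases in which one of $a,b,c$ is $0$, or $b+c=0$, are immediate: $x\cdot 0=0\cdot x=0$ for every real $x$, since the relevant ``unit hypotenuse'' then lies along the $y$-axis and every parallel to it is the $y$-axis, whose x-intercept is $0$.

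For the main case, assume $a,b,c>0$ and let $\ell$ be the line through $(0,b+c)$ parallel to $d$, so by Definition~1 its x-intercept is $\big((b+c)a,0\big)$. Partition the triangle $\Delta\{(0,0),(0,b+c),((b+c)a,0)\}$ by the horizontal line $y=c$, and let $P$ be the point where that line meets $\ell$; this is the device used in the proof of Theorem~1, but with the segment cut into a $b$-piece and a $c$-piece rather than unit pieces. First I would show, by angle-side-angle, that $\Delta\{(0,c),(0,b+c),P\}\cong\Delta\{(0,0),(0,b),(ba,0)\}$: the legs along the $y$-axis both have length $b$; the angles at $(0,b+c)$ and at $(0,b)$ are equal corresponding angles cut from the parallels $\ell$ and the line through $(0,b),(ba,0)$ by the $y$-axis as transversal; and the angles at $(0,c)$ and at $(0,0)$ are right angles. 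Hence $P=(ba,c)$. Applying the same argument to the lower piece gives $\Delta\{(ba,0),P,((b+c)a,0)\}\cong\Delta\{(0,0),(0,c),(ca,0)\}$ (vertical legs both $c$; equal corresponding angles from the parallels $\ell$ and the line through $(0,c),(ca,0)$; right angles at $(ba,0)$ and at $(0,0)$), so the horizontal leg $\overline{(ba,0),((b+c)a,0)}$ has length $ca$. Therefore $(b+c)a=ba+ca$.

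For the remaining sign patterns I would first record, as a short consequence of Theorem~2 and commutativity, the fully general rules $(-x)y=-(xy)$ and $x(-y)=-(xy)$ for all real $x,y$ (a brief check on the signs of $x$ and $y$, with the zero cases trivial). Given these: if $a<0$, write $a=-a'$ with $a'>0$; then $(b+c)a=-\big((b+c)a'\big)=-\big(ba'+ca'\big)=ba+ca$, the middle equality being the positive-second-factor case just proved (applied once more, if necessary, to reduce $(b+c)a'$, $ba'$, $ca'$ to products of positive numbers according to the signs of $b$ and $c$). The cases $a>0$ with $b$ or $c$ negative are handled the same way; for instance if $b<0<b+c$ then the positive case gives $ba=\big((b+c)+(-c)\big)a=(b+c)a+(-c)a=(b+c)a-ca$. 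Collecting the cases yields the theorem for all real $a,b,c$.

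I do not expect a deep obstacle. The place that needs care is the two angle-side-angle steps in the main case: one must be sure the ingredients are exactly the plane-geometry facts the paper is entitled to assume --- that two parallel lines cut by a transversal determine equal corresponding angles, and that each ``slicing'' segment really is perpendicular to the axis it meets. The other genuine labor is the sign bookkeeping of the last paragraph; it is entirely routine, and, as with Lemma~5, it is cleanest to isolate it once (as the general sign rules for a single product) rather than to repeat it inside the proof.
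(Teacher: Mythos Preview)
Your argument is correct and is essentially the paper's own: the paper also proves the positive case by slicing the big triangle $\{(0,0),(0,b+c),((b+c)a,0)\}$ to obtain the point $A=(ba,c)$ on the hypotenuse (read off the parallelogram $(0,b),(0,b+c),A,(ba,0)$ rather than via your first ASA step) and then applies angle--side--angle to match the lower-right piece with $\{(0,0),(0,c),(ca,0)\}$, dismissing the remaining sign cases \emph{mutatis mutandis}. The only differences are cosmetic---your extra ASA to locate $P$ versus the paper's parallelogram, and your more explicit handling of signs.
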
 
\begin{proof}
We prove the case when $a>0$, $b>0$ and $c>0$. The other cases follow mutatis mutandis. By definiton of multiplication we have $\overline{(0,c),(ca,0)} \parallel \overline{(0,b+c),((b+c)a,0)}$ (Why?). Refer to the figure below. By angle-side-angle we have triangle $\{(0,0),(0,c), (ca,0)\}$ congruent to triangle $\{(ba,0),A, ((b+c)a,0)\}$. This implies $(b+c)a-ba=ca$.
\[
	\xy
	(0,5)*{\bullet};(0,16)*{\bullet};(0,24)*{\bullet};(20,0)*{\bullet};(64,0)*{\bullet};(96,0)*{\bullet};
	(-3,5)*{1};(-3,16)*{b};(-5,24)*{b+c};(20,-3)*{a};(64,-3)*{ba};(96,-3)*{(b+c)a};
	(0,0)*{};(96,0)*{\bullet}**\dir{-};
	(0,5)*{};(20,0)*{\bullet}**\dir{-};
	(0,16)*{};(64,0)*{\bullet}**\dir{-};
	(0,24)*{};(96,0)*{\bullet}**\dir{-};
	(0,0)*{};(0,24)*{\bullet}**\dir{-};
	(64,0)*{};(64,8)*{}**\dir{.};
	(61,4)*{c};(0,-3)*{0};(0,0)*{\bullet};
	(0,8)*{\bullet};(64,8)*{\bullet}**\dir{.};
	(-3,8)*{c};(64,11)*{A};
	(0,8)*{\bullet};(32,0)*{\bullet}**\dir{-};(32,-3)*{ca}
	\endxy
\]
\end{proof}

\section{Multiplication with fractions}
We have choosen to write this section algebraically using the results from the previous sections, for two reasons. First, because the algebra is elegant enough, and second, to illustrate how the geometric definition of multiplication is in accord with the traditional algebraic definition and algorithms.
\begin{lemma} 
For nonzero real numbers $b_{1}$ and $b_{2}$ we have $\frac{1}{b_{1}}\cdot\frac{1}{b_{2}}=\frac{1}{b_{1}\cdot b_{2}}$.
\end{lemma}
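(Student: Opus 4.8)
The plan is to play this off the uniqueness clause of the inverse theorem (Theorem~4): there is exactly one nonzero real number whose product with $b_{1}\cdot b_{2}$ equals $1$, and that number is by definition $\frac{1}{b_{1}\cdot b_{2}}$, so it suffices to verify the single identity $\left(\frac{1}{b_{1}}\cdot\frac{1}{b_{2}}\right)\cdot(b_{1}\cdot b_{2})=1$. Everything then follows by matching this against the defining property of $\frac{1}{b_{1}\cdot b_{2}}$.

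Before the main computation I would dispatch the implicit hypothesis that $b_{1}\cdot b_{2}\neq 0$, so that $\frac{1}{b_{1}\cdot b_{2}}$ is even defined. This is immediate from Definition~1: if one factor of a product is $0$, the relevant parallel line is the $y$-axis itself, whose $x$-intercept is $0$, so any product having a zero factor is $0$; were $b_{1}\cdot b_{2}=0$ we would get $1=\left(\frac{1}{b_{1}}\cdot\frac{1}{b_{2}}\right)\cdot 0=0$, which is false.

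For the core step I would use the associative law (Theorem~8) and the commutative law (Theorem~7) to reorder the fourfold product and write
\[
\left(\frac{1}{b_{1}}\cdot\frac{1}{b_{2}}\right)\cdot(b_{1}\cdot b_{2})
=\left(\frac{1}{b_{1}}\cdot b_{1}\right)\cdot\left(\frac{1}{b_{2}}\cdot b_{2}\right)
=1\cdot 1
=1,
\]
where the middle equality is the defining property of the inverse and the last equality is Theorem~1 with $a=b=1$ (so $1\cdot 1=\sum_{i=1}^{1}1=1$). By the uniqueness asserted in Theorem~4, this forces $\frac{1}{b_{1}}\cdot\frac{1}{b_{2}}=\frac{1}{b_{1}\cdot b_{2}}$.

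The only place calling for care — and the step I would write out rather than wave at — is the reordering of $\frac{1}{b_{1}}\cdot\frac{1}{b_{2}}\cdot b_{1}\cdot b_{2}$ into the paired form $\left(\frac{1}{b_{1}}\cdot b_{1}\right)\cdot\left(\frac{1}{b_{2}}\cdot b_{2}\right)$, which uses the full strength of both the commutative and associative laws (valid also for negative factors); both are already proved in the paper, so the argument is legitimate. There is no genuine geometric obstacle here: the lemma is a purely formal consequence of the field-type properties established earlier, which is precisely why this section is presented algebraically.
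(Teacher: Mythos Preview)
Your argument is correct and is essentially the paper's own proof: both compute $(b_{1}b_{2})\bigl(\tfrac{1}{b_{1}}\cdot\tfrac{1}{b_{2}}\bigr)=1$ via commutativity and associativity (Theorems~7 and~8) and then invoke the uniqueness clause of Theorem~4. Your added remarks that $b_{1}b_{2}\neq 0$ and that $1\cdot 1=1$ are welcome extra care but do not change the route.
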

\begin{proof}
Observe that $1=(b_{1}\cdot b_{2})(\frac{1}{b_{1}\cdot b_{2}})$. Moreover, by the associative and commutative properties of multiplication we have: $ (b_{1}\cdot b_{2})(\frac{1}{b_{1}}\cdot\frac{1}{b_{2}})=(b_{1}(\frac{1}{b_{1}}))\cdot(b_{2}(\frac{1}{b_{2}}))=1$. Hence by uniqueness of inverses we must $\frac{1}{b_{1}} \cdot\frac{1}{b_{2}}=\frac{1}{b_{1}\cdot b_{2}}$.
\end{proof}
\begin{theorem} 
For real numbers $a_{1}$, $a_{2}$, $b_{1} \neq 0$ and $b_{1} \neq 0$ we have $\frac{a_{1}}{b_{1}}\cdot\frac{a_{2}}{b_{2}}=\frac{a_{1}\cdot a_{2}}{b_{1}\cdot b_{2}}$.
\end{theorem}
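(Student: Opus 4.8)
The plan is to reduce the identity entirely to the definition $\frac{a}{b}=a\cdot\frac{1}{b}$ (the inverse supplied by Theorem 4), together with Lemma 6 and the associative and commutative laws of multiplication (Theorems 8 and 3). First I would rewrite the left-hand side as the four-factor product $\left(a_{1}\cdot\frac{1}{b_{1}}\right)\cdot\left(a_{2}\cdot\frac{1}{b_{2}}\right)$. Using associativity (Theorem 8) repeatedly, together with a single application of commutativity (Theorem 3) to interchange $\frac{1}{b_{1}}$ and $a_{2}$, I would regroup this as $\left(a_{1}\cdot a_{2}\right)\cdot\left(\frac{1}{b_{1}}\cdot\frac{1}{b_{2}}\right)$.

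Next I would invoke Lemma 6 to replace $\frac{1}{b_{1}}\cdot\frac{1}{b_{2}}$ with $\frac{1}{b_{1}\cdot b_{2}}$, obtaining $\left(a_{1}\cdot a_{2}\right)\cdot\frac{1}{b_{1}\cdot b_{2}}$, which is by definition $\frac{a_{1}\cdot a_{2}}{b_{1}\cdot b_{2}}$. This finishes the chain of equalities and hence the theorem. Note that no hypothesis on $a_{1}$ or $a_{2}$ is needed (either may be zero): both the definition of $\frac{a}{b}$ and Lemma 6 require only that the denominators $b_{1},b_{2}$ be nonzero, which is assumed.

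The only point needing care — and the closest thing to an obstacle — is that the regrouping step uses a rearrangement of a product of \emph{four} factors, whereas the paper has established only the binary (three-factor) forms of associativity and commutativity. I would handle this either by remarking that the general rearrangement law follows from the binary ones by a routine induction, or, to keep the argument self-contained, by spelling out the regrouping as an explicit chain of binary steps, e.g. $\left(a_{1}\cdot\frac{1}{b_{1}}\right)\cdot\left(a_{2}\cdot\frac{1}{b_{2}}\right)=\left(\left(a_{1}\cdot\frac{1}{b_{1}}\right)\cdot a_{2}\right)\cdot\frac{1}{b_{2}}=\left(a_{1}\cdot\left(\frac{1}{b_{1}}\cdot a_{2}\right)\right)\cdot\frac{1}{b_{2}}=\left(a_{1}\cdot\left(a_{2}\cdot\frac{1}{b_{1}}\right)\right)\cdot\frac{1}{b_{2}}=\left(\left(a_{1}\cdot a_{2}\right)\cdot\frac{1}{b_{1}}\right)\cdot\frac{1}{b_{2}}=\left(a_{1}\cdot a_{2}\right)\cdot\left(\frac{1}{b_{1}}\cdot\frac{1}{b_{2}}\right)$, each equality citing Theorem 8 or Theorem 3. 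Everything after that is the single application of Lemma 6 and unwinding the definition of the fraction notation, so there is no real difficulty beyond this bookkeeping.
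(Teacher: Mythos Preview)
Your argument is correct and is essentially identical to the paper's own proof: unwind the fraction notation, regroup using associativity and commutativity, then apply Lemma~6. One small citation slip: commutativity of multiplication is Theorem~7 in this paper, not Theorem~3 (Theorem~3 is the statement that $0<a<1$ and $b>0$ imply $ab<b$), so your references to ``Theorem~3'' should read ``Theorem~7''.
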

\begin{proof}
By the associative and commutative properties of multiplication we have: $\frac{a_{1}}{b_{1}}\cdot\frac{a_{2}}{b_{2}}=(a_{1}(\frac{1}{b_{1}}))(a_{2}(\frac{1}{b_{2}}))=(a_{1}\cdot a_{2})(\frac{1}{b_{1}}\cdot\frac{1}{b_{2}})$. Moreover by Lemma 6 we have: $(a_{1}\cdot a_{2})(\frac{1}{b_{1}}\cdot\frac{1}{b_{2}})=(a_{1}\cdot a_{2})(\frac{1}{b_{1}\cdot{b_{2}}})=\frac{a_{1}\cdot a_{2}}{b_{1}\cdot b_{2}}$. Hence we must have  $\frac{a_{1}}{b_{1}}\cdot\frac{a_{2}}{b_{2}}=\frac{a_{1}\cdot a_{2}}{b_{1}\cdot b_{2}}$.
\end{proof} 
\begin{corollary} 
For real numbers $a$, $b \neq 0$ and $k\neq 0$ we have $\frac{a}{b}=\frac{ak}{bk}$
\end{corollary}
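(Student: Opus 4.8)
The plan is to derive $\frac{a}{b}=\frac{ak}{bk}$ directly from Theorem 8 by recognizing $\frac{ak}{bk}$ as the product of two fractions. First I would write $\frac{ak}{bk} = \frac{a}{b}\cdot\frac{k}{k}$ using Theorem 8 with $a_1 = a$, $b_1 = b$, $a_2 = k$, $b_2 = k$ (legitimate since $b\neq 0$ and $k\neq 0$). Then I would observe that $\frac{k}{k} = k\cdot\frac{1}{k} = 1$ by the definition of inverse in Theorem 3. Substituting gives $\frac{ak}{bk} = \frac{a}{b}\cdot 1 = \frac{a}{b}$, where the last equality uses that $1$ is the multiplicative identity.

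The one gap to address is that $1$ being a two-sided multiplicative identity has not been stated as a numbered result, though it is implicit: by Definition 1, $1\cdot x$ is the $x$-intercept of the line through $(0,1)$ parallel to $\overline{(x,0),(0,1)}$, which is the line through $(0,1)$ and $(x,0)$ itself, so $1\cdot x = x$; commutativity (Theorem 4) then gives $x\cdot 1 = x$ as well. I would either invoke this in one sentence or, more cleanly, note that $\frac{a}{b}\cdot\frac{k}{k} = \frac{a}{b}\cdot 1$ and that $\frac{a}{b}\cdot 1 = a\cdot\frac{1}{b}\cdot 1 = a\cdot\frac{1}{b} = \frac{a}{b}$ by associativity (Theorem 8) together with $1\cdot\frac{1}{b} = \frac{1}{b}$, reducing everything to the single fact that $1$ is a left identity, which follows immediately from Definition 1 as noted.

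The main obstacle is purely cosmetic rather than mathematical: it is making sure every step cites an already-established result and that the role of the multiplicative identity is handled without circularity. Since $\frac{k}{k}$ literally unpacks to $k\cdot\frac{1}{k}$, and $k\cdot\frac{1}{k}=1$ is exactly the content of Theorem 3, there is no real difficulty. The proof should be two or three lines.

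\begin{proof}
By Theorem 8, $\frac{ak}{bk}=\frac{a}{b}\cdot\frac{k}{k}$. By the definition of inverse (Theorem 3), $\frac{k}{k}=k\cdot\frac{1}{k}=1$. Since, by Definition 1, $1\cdot x$ is the $x$-intercept of the line through $(0,1)$ parallel to $\overline{(x,0),(0,1)}$ (namely the line through $(0,1)$ and $(x,0)$ itself), we have $1\cdot x=x$ for every real number $x$; combined with commutativity of multiplication (Theorem 4), $1$ is a two-sided multiplicative identity. Hence $\frac{ak}{bk}=\frac{a}{b}\cdot 1=\frac{a}{b}$.
\end{proof}
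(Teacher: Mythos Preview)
Your proof is essentially identical to the paper's, which in full reads $\frac{ak}{bk}=\frac{a}{b}\cdot\frac{k}{k}=\frac{a}{b}\cdot 1=\frac{a}{b}$; the only difference is that you spell out why $1$ is a multiplicative identity, which the paper leaves implicit. Note that your theorem numbers are shifted relative to the paper: the product-of-fractions result is Theorem~10, the existence of inverses is Theorem~4, and commutativity is Theorem~7.
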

\begin{proof}
By Theorem 10 we have: $\frac{ak}{bk}=\frac{a}{b}\cdot \frac{k}{k}=\frac{a}{b}\cdot 1=\frac{a}{b}$
\end{proof}
\begin{corollary} 
For real numbers $a_{1}$, $a_{2}$, $b_{1} \neq 0$ and $b_{1} \neq 0$ we have $\frac{a_{1}}{b_{1}}=\frac{a_{2}}{b_{2}}$ if and only if $a_{1}b_{2}=a_{2}b_{1}$.
\end{corollary}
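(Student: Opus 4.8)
The plan is to reduce the biconditional $\frac{a_1}{b_1}=\frac{a_2}{b_2} \iff a_1 b_2 = a_2 b_1$ to the already-established rules for multiplying fractions, clearing denominators by multiplying through by $b_1 b_2$. First I would prove the forward direction: assuming $\frac{a_1}{b_1}=\frac{a_2}{b_2}$, I would multiply both sides by $b_1 b_2$ and use associativity and commutativity (Theorems 8 and 7) together with the defining property $b\cdot\frac{1}{b}=1$ (Theorem 3) to simplify $b_1 b_2 \cdot \frac{a_1}{b_1}$ to $a_1 b_2$ and $b_1 b_2 \cdot \frac{a_2}{b_2}$ to $a_2 b_1$, yielding $a_1 b_2 = a_2 b_1$.

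For the converse, assuming $a_1 b_2 = a_2 b_1$, I would multiply both sides by $\frac{1}{b_1}\cdot\frac{1}{b_2}$ and again reorganize using commutativity and associativity: the left side becomes $\left(a_1 \cdot \frac{1}{b_1}\right)\left(b_2 \cdot \frac{1}{b_2}\right) = \frac{a_1}{b_1}\cdot 1 = \frac{a_1}{b_1}$, and the right side becomes $\left(a_2 \cdot \frac{1}{b_2}\right)\left(b_1 \cdot \frac{1}{b_1}\right) = \frac{a_2}{b_2}$, so $\frac{a_1}{b_1}=\frac{a_2}{b_2}$. Alternatively — and perhaps more cleanly — I could package both directions at once by noting that $\frac{a_1}{b_1}=\frac{a_2}{b_2}$ holds iff $\frac{a_1 b_2}{b_1 b_2}=\frac{a_2 b_1}{b_1 b_2}$ (using Corollary 3 to rewrite each side with common denominator $b_1 b_2$, via $\frac{a_1}{b_1}=\frac{a_1 b_2}{b_1 b_2}$ and $\frac{a_2}{b_2}=\frac{a_2 b_1}{b_2 b_1}=\frac{a_2 b_1}{b_1 b_2}$), and then observing that two fractions with the same nonzero denominator are equal iff their numerators are equal.

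That last observation — that $\frac{x}{d}=\frac{y}{d}$ implies $x=y$ when $d\neq 0$ — is the one small gap I would need to fill, and I expect it to be the main (though still minor) obstacle, since it is not quite verbatim among the stated results. It follows quickly: $\frac{x}{d}=\frac{y}{d}$ means $x\cdot\frac{1}{d}=y\cdot\frac{1}{d}$, and multiplying both sides by $d$ and using Theorem 3 gives $x=y$; the reverse implication is immediate. Everything else is a routine rearrangement of products licensed by the commutative, associative, and inverse properties proven earlier, so I would keep the write-up short, presenting the forward direction explicitly and remarking that the converse follows by reversing the same chain of equalities.
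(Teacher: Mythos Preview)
Your proposal is correct and matches the paper's argument: the paper presents exactly your ``alternative'' route as a single chain of biconditionals, multiplying both sides of $a_1 b_2 = a_2 b_1$ by $\frac{1}{b_1 b_2}$, interpreting the result via Theorem~10 as $\frac{a_1 b_2}{b_1 b_2}=\frac{a_2 b_1}{b_1 b_2}$, and then cancelling with Corollary~3. The cancellation step you flagged (that multiplying by a nonzero number is reversible) is indeed used implicitly in the paper's first biconditional, and your justification of it is the right one.
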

\begin{proof}
By Theorem 10 and Corollary 3 we have $a_{1}b_{2}=a_{2}b_{1}\Leftrightarrow (\frac{1}{b_{1}b_{2}})(a_{1}b_{2})=(\frac{1}{b_{1}b_{2}})(a_{2}b_{1})\Leftrightarrow \frac{a_{1}b_{2}}{b_{2}b_{1}}=\frac{b_{1}a_{2}}{b_{2}b_{1}}\Leftrightarrow \frac{a_{1}}{b_{1}}=\frac{a_{2}}{b_{2}}$
\end{proof}
\section{Similar triangles}
\begin{definition}
We call two triangles similar if they have the same angles.
\end{definition}
\begin{lemma}
If $\Delta ABC$ is a right triangle with sides $a\leq b< c$ then any triangle $\Delta A_{1}B_{1}C_{1}$ with sides $a_{1}\leq b_{1}\leq c_{1}$ is similar to $\Delta ABC$ if and only if there exists a $k>0$ such that $a=ka_{1}, b=kb_{1}$ and $c=kc_{1}$.
\end{lemma}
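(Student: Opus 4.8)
The plan is to convert the condition ``$\Delta A_1B_1C_1$ is similar to $\Delta ABC$'' into the single equation $ab_1=a_1b$, and then invoke the Pythagorean Theorem to upgrade this to the full proportionality $a=ka_1,\ b=kb_1,\ c=kc_1$. I will work with explicit coordinate placements and use only the congruence facts already assumed, together with Theorem~5 and Theorem~6. First the bookkeeping on orientation. Since $\Delta ABC$ is a right triangle with $a\le b<c$, its hypotenuse is the strictly longest side, so $c$ is the hypotenuse, $a$ and $b$ are the legs with the right angle between them, and the acute angle opposite $a$ is at most the acute angle opposite $b$. If $\Delta A_1B_1C_1$ is similar to $\Delta ABC$ it has a right angle, hence is a right triangle whose strictly longest side $c_1$ is the hypotenuse and whose legs are $a_1,b_1$; and because $a_1\le b_1$, sameness of angles forces the angle of $\Delta A_1B_1C_1$ opposite $a_1$ to equal the angle of $\Delta ABC$ opposite $a$ (and likewise for $b_1$ and $b$).

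For the geometric core, place $\Delta ABC$ with its right-angle vertex at the origin, the leg of length $a$ along the positive $y$-axis and the leg of length $b$ along the positive $x$-axis, so its hypotenuse is the segment $\overline{(0,a),(b,0)}$; place $\Delta A_1B_1C_1$ the same way, so its hypotenuse is $\overline{(0,a_1),(b_1,0)}$. I claim these two hypotenuse segments are parallel if and only if $ab_1=a_1b$: applying Theorem~5 to the right triangle with vertices $(0,0),(0,a),(b_1,0)$ and the right triangle with vertices $(0,0),(0,a_1),(b,0)$ — whose legs are $a,b_1$ and $a_1,b$ respectively — shows that $\overline{(0,a),(b,0)}\parallel\overline{(0,a_1),(b_1,0)}$ exactly when those two auxiliary triangles have equal area, and by Theorem~6 (applied to $T_{ab_1}$ and $T_{a_1b}$) that happens exactly when $a\cdot b_1=a_1\cdot b$. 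On the other hand, since both triangles have a leg along the positive $x$-axis and the $x$-axis serves as a common transversal, the elementary theory of parallels shows that $\overline{(0,a),(b,0)}\parallel\overline{(0,a_1),(b_1,0)}$ if and only if the angle of $\Delta ABC$ at $(b,0)$ equals the angle of $\Delta A_1B_1C_1$ at $(b_1,0)$; and since each triangle also has a right angle at the origin and the angles of a triangle sum to two right angles, this single equality of angles is equivalent to the two triangles having the same angles. Chaining these equivalences gives: $\Delta ABC$ and $\Delta A_1B_1C_1$ are similar $\iff$ the two hypotenuses are parallel $\iff$ $ab_1=a_1b$.

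It remains to pass between $ab_1=a_1b$ and the stated proportionality. Given $ab_1=a_1b$, multiply both sides by $\frac{1}{a_1}\cdot\frac{1}{b_1}$ and use commutativity, associativity and the inverse property to obtain $\frac{a}{a_1}=\frac{b}{b_1}$; set $k:=\frac{a}{a_1}>0$, so $a=ka_1$ and $b=kb_1$. To reach $c=kc_1$ I use Pythagoras: $c^2=a^2+b^2=k^2(a_1^2+b_1^2)$, and in the forward direction $\Delta A_1B_1C_1$ is a right triangle with legs $a_1,b_1$, so $c_1^2=a_1^2+b_1^2$, whence $c^2=k^2c_1^2$ and, lengths being positive, $c=kc_1$. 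In the backward direction we are handed $a=ka_1,\ b=kb_1,\ c=kc_1$ outright; then $c_1^2=c^2/k^2=(a^2+b^2)/k^2=a_1^2+b_1^2$, so if $d$ denotes the hypotenuse of the right triangle with legs $a_1,b_1$ we have $d^2=a_1^2+b_1^2=c_1^2$ and hence $d=c_1$, so by side-side-side the triangle with sides $a_1\le b_1\le c_1$ is congruent to that right triangle and is therefore itself right — which is what legitimizes the coordinate placement above — and $ab_1=ka_1b_1=a_1b$, so the chain of equivalences yields similarity. I expect the main obstacle to be precisely this geometric core: arranging the bookkeeping so that ``same angles'' is faithfully encoded by parallelism of the hypotenuses \emph{with the correct orientation}, and then matching that parallelism to the exact equation $ab_1=a_1b$ via the right instance of Theorem~5; once this dictionary is in place, the rest is routine manipulation with the already-established field properties and the Pythagorean Theorem.
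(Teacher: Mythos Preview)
Your proof is correct and shares the paper's overall architecture: reduce similarity of the two right triangles to parallelism of their hypotenuses (after a coordinate placement with legs along the axes), translate that parallelism into the single equation $ab_1=a_1b$ on the legs, and then use the Pythagorean Theorem (and its converse) to pick up the hypotenuse relation $c=kc_1$ and, in the backward direction, to certify that $\Delta A_1B_1C_1$ is right. The one genuine difference is in how the step ``hypotenuses parallel $\iff$ $ab_1=a_1b$'' is justified. The paper introduces an auxiliary unit triangle with legs $1$ and $r$ whose hypotenuse $\overline{(0,1),(r,0)}$ is parallel to both placed hypotenuses, and then reads off $b=ar$ and $b_1=a_1r$ directly from Definition~1, so that $b=ar=(ka_1)r=k(a_1r)=kb_1$; you instead apply Theorem~5 to the pair $T_{ab_1}$, $T_{a_1b}$ (equal areas $\iff$ the cross-segments $\overline{(0,a),(b,0)}$ and $\overline{(0,a_1),(b_1,0)}$ are parallel) and then Theorem~6 ($A(T_{ab_1})=A(T_{a_1b})\iff ab_1=a_1b$). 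Your route passes through the area machinery and is therefore slightly less elementary, but it packages the equivalence as a single clean biconditional chain and avoids the auxiliary $r$; the paper's route stays closer to the bare geometric definition of multiplication and makes the role of the unit explicit.
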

\begin{proof}
$(\Rightarrow)$\\
Suppose $\Delta ABC$ is similar to $\Delta A_{1}B_{1}C_{1}$. Without-loss-of-generality we may assume $1\leq a_{1}<a$. There exists a $k>1$ such that $a=ka_{1}$. Mark-off one unit along $a_{1}$. Next create a right triangle similar to $\Delta ABC$ with legs 1 and r (refer to diagram below).\\
\[
	\xy
	(0,43)*{\bullet};(100,0)*{\bullet} **\dir{-};
	(0,15)*{\bullet};(34.1,0)*{\bullet} **\dir{-};
	(100,0)*{\bullet};(0,0)*{} **\dir{-}; 
	(0,0)*{};(0,43)*{\bullet} **\dir{-}; 
	(0,7)*{\bullet};(16.275,0)*{\bullet} **\dir{-}; 
	(-3,43)*{a};(16.275,-3)*{r};(100,-3)*{b};(-3,7)*{1};(-3,15)*{a_{1}};(34.1,-3)*{b_{1}};(0,-3)*{0};(0,0)*{\bullet};
	\endxy
\]
\\
 By definition of multiplication we have,\\ \hspace{.75in} $b_{1}=a_{1}r$ and $b=ar=k(a_{1}r)=kb_{1}$ \hspace{.51in}(1).\\Lastly by the Pythagorean Thm (we can use the Pythagorean Thm here since it can be proved using only area considerations),\\ \hspace{.75in}$a_{1}^{2}+b_{1}^{2}=c_{1}^{2}$ and $a^{2}+b^{2}=c^{2}$ \hspace{.5in} (2).\\ By (1) and (2) we must have $c^{2}=(kc_{1})^{2}$ or $c=kc_{1}$. Hence, claim follows.
\\
$(\Leftarrow)$\\
Suppose $a=ka_{1}, b=kb_{1}$ and $c=kc_{1}$. By the Pythagorean Thm, $a^{2}+b^{2}=c^{2}$ which implies $a_{1}^{2}+b_{1}^{2}=c_{1}^{2}$. Hence, $\Delta A_{1}B_{1}C_{1}$ must be a right triangle. By definition of multiplication, there exists a unique $r>0$ such that $ar=b$ and $\overline{(0,a),(b,0)}$ is parallel to $\overline{(0,1),(r,0)}$. Now $ar=b, a=ka_{1}$ and $b=kb_{1}$ implies $ra_{1}=b_{1}$. Therefore, by defintion of multiplication we have $\overline{(0,a_{1}),(b_{1},0)}$ parallel to $\overline{(0,1),(r,0)}$. Hence, the triangles must be similar.
\end{proof}
\begin{theorem}
If $\Delta ABC$ has sides $a\leq b\leq c$ then any triangle $\Delta A_{1}B_{1}C_{1}$  with sides $a_{1}\leq b_{1} \leq c_{1}$ is similar to $\Delta ABC$ if and only if there exists a $ k>0$ such that $a=ka_{1}, b=kb_{1}$ and $c=kc_{1}$.
\end{theorem}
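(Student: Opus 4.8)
The plan is to reduce the general statement to the right-triangle case of Lemma~7 by cutting each triangle into two right triangles that share an altitude. Since $a\le b\le c$, the largest angle of $\Delta ABC$ is the one opposite the side of length $c$, so the two angles at the ends of that side are both acute; hence the foot $H$ of the altitude dropped from the vertex opposite $c$ falls strictly inside that side. Write $h$ for the length of this altitude and let $p,q$ be the lengths of the two pieces into which $H$ cuts the side of length $c$, with $p$ adjacent to the vertex where the sides of lengths $b$ and $c$ meet; then $p+q=c$, the right triangle with legs $h,p$ has hypotenuse $b$, and the right triangle with legs $h,q$ has hypotenuse $a$. Perform the same construction on $\Delta A_1B_1C_1$, getting $h_1,p_1,q_1$ with $p_1+q_1=c_1$; this is legitimate because $a_1\le b_1\le c_1$ as well.

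For the forward direction, suppose the two triangles have the same angles, so that $\angle A=\angle A_1$ and $\angle B=\angle B_1$. Then the right triangle with sides $(h,p,b)$ has the same angles as the one with sides $(h_1,p_1,b_1)$, and the right triangle with sides $(h,q,a)$ has the same angles as the one with sides $(h_1,q_1,a_1)$. Applying Lemma~7 to each pair produces $k',k''>0$ with $h=k'h_1$, $p=k'p_1$, $b=k'b_1$ and $h=k''h_1$, $q=k''q_1$, $a=k''a_1$. The two values of $h/h_1$ agree, so $k'=k''=:k$, and then $a=ka_1$, $b=kb_1$, and $c=p+q=k(p_1+q_1)=kc_1$, which is what we want.

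For the converse, assume $a=ka_1$, $b=kb_1$, $c=kc_1$ for some $k>0$. From $p+q=c$ together with the Pythagorean relations $h^2+p^2=b^2$ and $h^2+q^2=a^2$ (the Pythagorean theorem being available, as it rests only on area considerations) one solves $p=(c^2+b^2-a^2)/(2c)$ and $q=(c^2+a^2-b^2)/(2c)$, and likewise for $p_1,q_1$. Substituting the scaling relations gives $p=kp_1$ and $q=kq_1$, whence $h^2=b^2-p^2=k^2(b_1^2-p_1^2)=k^2h_1^2$ and $h=kh_1$. Thus the right triangle with sides $(h,p,b)$ has sides proportional to those of the right triangle with sides $(h_1,p_1,b_1)$, so Lemma~7 makes them similar and $\angle A=\angle A_1$; the same argument on the other pair gives $\angle B=\angle B_1$, and then $\angle C=\angle C_1$ since the angles of each triangle sum to the same total. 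Hence the triangles have the same angles.

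The part I expect to take the most care is the bookkeeping around the altitude: one must verify that the foot of the altitude from the vertex opposite the longest side really lands in the interior of that side, so the splitting into two right triangles is honest in both triangles, and then --- in the converse --- that the altitude and its two sub-segments all rescale by the \emph{same} factor $k$, which is precisely where the Pythagorean theorem and a brief algebraic computation come in. Everything else is a direct appeal to Lemma~7 and to the arithmetic of multiplication developed in the earlier sections.
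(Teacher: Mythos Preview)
Your proposal is correct and follows the paper's own approach: the paper's proof is simply ``Note any triangle can be cut into two right triangles. Hence claim follows by applying previous Lemma to each of the right triangles.'' You have supplied precisely the details that sentence suppresses --- in particular, the verification that the foot of the altitude lands interior to the longest side, and (in the converse) the Pythagorean computation showing that the altitude and the two sub-segments inherit the same scaling factor $k$, which is exactly what is needed to invoke Lemma~7 on both halves consistently.
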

\begin{proof}
Note any triangle can be cut into two right triangles. Hence claim follows by applying previous Lemma to each of the right triangles.
\end{proof}
\section{Conclusion}
Mathematicians in the time of Pythagoras lived in a world where magnitudes and numbers were not the same thing (see [3]). 
By introducing the geometric axiomatic of real number multiplication, we hope to rescue students and teachers from a similar disconnect, most recently manifested in arguments whether multiplication is repeated addition (see [4] and [5]). The lack of an easy visualization of multiplication (how to multiply two line segments), in contrast with addition, may account for some of the struggles that students encounter when they first make the transition from the whole numbers to the integers and rational numbers. 
Extending multiplication of whole numbers to, integers, rational and real numbers within the same model we hope will be of some pedagogical value. We believe that both prospective K-12 mathematics teachers and STEM students could especially benefit by being exposed to the material in this paper before graduating from university. 
\section{acknowledgments}
We would like to thank Dr. Roy Smith for pointing out how Theorem 4 could be proven using Propostion 37 in book one of Euclid's \textit{elements}. We would also like to thank Jonathan Crabtree for pointing out the similarities between Hibert's approach and ours, which we were unaware of. In addition, we would like to thank Dr. Keith Devlin, Dr. Art DiVito, Dr. Bob Stein, Dr. Oscar Chavez, Dr. David Klein, Dr. Hung-Hsi Wu, Dr. Paul Libbrecht, Dr. Davida Fischman,  Dr. Rebecca Reinger, Colin McAllister, and all the anonymous referees for reading over and commenting on various versions of this paper. This in turn has lead to overall improvements in the readability and presentation of the paper. 

\end{document}